\newcommand{\email}[1]{\href{mailto:#1}{#1}}
\numberwithin{equation}{section}
\newtheorem{theorem}{Theorem}
\newtheorem{proposition}[theorem]{Proposition}
\newtheorem{lemma}[theorem]{Lemma}
\theoremstyle{remark}
\newtheorem{remark}[theorem]{Remark}
\theoremstyle{definition}
\newcommand{\st}{\,:\,}
\newcommand{\Real}{\mathbb{R}}
\DeclareRobustCommand{\bvec}[1]{\boldsymbol{#1}}
  \renewcommand{\bvec}[1]{#1}%
\newcommand{\uvec}[1]{\underline{\bvec{#1}}}
\newcommand{\cvec}[1]{\bvec{\mathcal{#1}}}
\DeclareMathOperator{\GRAD}{\bf grad}
\DeclareMathOperator{\CURL}{\bf curl}
\DeclareMathOperator{\DIV}{div}
\DeclareMathOperator{\ROT}{rot}
\DeclareMathOperator{\VROT}{\bf rot}
\newcommand{\compl}{{\rm c}}
\newcommand{\Hcurl}[1]{\bvec{H}(\CURL;#1)}
\newcommand{\Hdiv}[1]{\bvec{H}(\DIV;#1)}
\newcommand{\Xgrad}[2][]{\underline{X}_{\GRAD,#2}^{#1}}
\newcommand{\Xcurl}[2][]{\uvec{X}_{\CURL,#2}^{#1}}
\newcommand{\Xdiv}[2][]{\uvec{X}_{\DIV,#2}^{#1}}
\newcommand{\Xbullet}[2][]{\underline{X}_{\bullet,#2}^{#1}}
\newcommand{\sfP}{{\mathsf{P}}}
\newcommand{\sfQ}{{\mathsf{Q}}}
\newcommand{\Igrad}[2][]{\underline{I}_{\GRAD,#2}^{#1}}
\newcommand{\Icurl}[2][]{\uvec{I}_{\CURL,#2}^{#1}}
\newcommand{\Idiv}[2][]{\uvec{I}_{\DIV,#2}^{#1}}
\newcommand{\lproj}[2]{\pi_{\Poly{},#2}^{#1}}
\newcommand{\Gproj}[2]{\bvec{\pi}_{\cvec{G},#2}^{#1}}
\newcommand{\Gcproj}[2]{\bvec{\pi}_{\cvec{G},#2}^{\compl,#1}}
\newcommand{\Xproj}[2]{\bvec{\pi}_{\cvec{X},#2}^{#1}}
\newcommand{\Xcproj}[2]{\bvec{\pi}_{\cvec{X},#2}^{\compl,#1}}
\newcommand{\uGh}[1][]{\uvec{G}_h^{#1}}
\newcommand{\uCh}[1][]{\uvec{C}_h^{#1}}
\newcommand{\Dh}[1][]{D_h^{#1}}
\newcommand{\cGF}[1][]{\boldsymbol{\mathsf{G}}_F^{#1}}
\newcommand{\cGT}[1][]{\boldsymbol{\mathsf{G}}_T^{#1}}
\newcommand{\CF}[1][]{C_F^{#1}}
\newcommand{\cCT}[1][]{\boldsymbol{\mathsf{C}}_T^{#1}}
\newcommand{\DT}[1][]{D_T^{#1}}
\newcommand{\trF}[1][]{\gamma_F^{#1}}
\newcommand{\trFt}[1][]{\bvec{\gamma}_{{\rm t},F}^{#1}}
\newcommand{\faces}[1]{\mathcal{F}_{#1}}
\newcommand{\edges}[1]{\mathcal{E}_{#1}}
\newcommand{\FT}{\faces{T}}
\newcommand{\ET}[1][T]{\edges{#1}}
\newcommand{\EF}{\edges{F}}
\newcommand{\normal}{\bvec{n}}
\newcommand{\tangent}{\bvec{t}}
\newcommand{\Poly}[2][]{\mathcal{P}_{#1}^{#2}}
\newcommand{\vPoly}[2][]{\cvec{P}_{#1}^{#2}}
\newcommand{\Roly}[1]{\cvec{R}^{#1}}
\newcommand{\Goly}[1]{\cvec{G}^{#1}}
\newcommand{\cRoly}[1]{\cvec{R}^{\compl,#1}}
\newcommand{\cGoly}[1]{\cvec{G}^{\compl,#1}}
\newcommand{\norm}[2][]{\|#2\|_{#1}}
\newcommand{\vvvert}{\vert\kern-0.25ex\vert\kern-0.25ex\vert}
\newcommand{\Mh}[1][h]{\mathcal{M}_{#1}}
\newcommand{\Th}[1][h]{\mathcal{T}_{#1}}
\newcommand{\Fh}[1][h]{\mathcal{F}_{#1}}
\newcommand{\Eh}[1][h]{\mathcal{E}_{#1}}
\newcommand{\Vh}{\mathcal{V}_h}
\newcommand{\Pgrad}[1][]{P_{\GRAD,T}^{#1}}
\newcommand{\Pcurl}[1][]{\bvec{P}_{\CURL,T}^{#1}}
\newcommand{\Pgradh}[1][]{P_{\GRAD,h}^{#1}}
\newcommand{\Pcurlh}[1][]{\bvec{P}_{\CURL,h}^{#1}}
\newcommand{\Pdiv}[1][]{\bvec{P}_{\DIV,T}^{#1}}
\newcommand{\Pbullet}[1][]{P_{\bullet,T}^{#1}}
\newcommand{\Pbulleth}[1][]{P_{\bullet,h}^{#1}}
\newcommand{\pf}[2][p]{\Omega^{#1}(#2)} % p-form
\newcommand{\Lapf}[2][p]{\pf[#1]{#2}\otimes\La} % Lie algebra p-forms
\newcommand{\Gcd}[1]{D_{#1}} % Gauge covariant derivative
\newcommand{\Ed}{d} % Exterior derivative
\newcommand{\Hstar}{{\star}} % Hodge star
\newcommand{\La}{\mathfrak{g}}
\newcommand{\LaXgrad}[2][]{\Xgrad[#1]{#2}\otimes\La}
\newcommand{\LaXcurl}[2][]{\Xcurl[#1]{#2}\otimes\La}
\newcommand{\LaXdiv}[2][]{\Xdiv[#1]{#2}\otimes\La}
\newcommand{\LaXbullet}[2][]{\Xbullet[#1]{#2}\otimes\La}
\newcommand{\LaHcurl}[1]{\bvec{H}(\CURL;#1)\otimes\La}
\newcommand{\LaHdiv}[1]{\bvec{H}(\DIV;#1)\otimes\La}
\newcommand{\LaHgrad}[1]{H^1(#1)\otimes\La}
\newcommand{\LaIgrad}[2][\La]{\Igrad[#1]{#2}}
\newcommand{\LaIcurl}[2][\La]{\Icurl[#1]{#2}}
\newcommand{\LauGh}[1][\La]{\uGh[#1]}
\newcommand{\LauCh}[1][\La]{\uCh[#1]}
\newcommand{\LaDh}[1][\La]{\Dh[#1]}
\newcommand{\LatrFt}[1][\La]{\trFt[#1]}
\newcommand{\LaCF}[1][\La]{\CF[#1]}
\newcommand{\LaPgrad}[1][\La]{\Pgrad[#1]}
\newcommand{\LaPcurl}[1][\La]{\Pcurl[#1]}
\newcommand{\LaPgradh}[1][\La]{\Pgradh[#1]}
\newcommand{\LaPcurlh}[1][\La]{\Pcurlh[#1]}
\newcommand{\LaPbullet}[1][\La]{\Pbullet[#1]}
\newcommand{\LaPbulleth}[1][\La]{\Pbulleth[#1]}
\newcommand{\ebkt}[3][]{\Hstar[#2,#3]^{#1}}
\newcommand{\ebkttr}[3][\DIV,h]{\ebkt[#1]{#2}{#3}}
\newcommand{\ebkttrk}[3][\DIV,k,h]{\ebkt[#1]{#2}{#3}}
\newcommand{\dbkt}[2]{\ebkt{#1}{\Hstar#2}}
\newcommand{\ymE}{\bvec{E}}
\newcommand{\ymB}{\bvec{B}}
\newcommand{\ymA}{\bvec{A}}
\newcommand{\deltat}{\delta\hspace*{-0.15ex}t}
\pgfplotsset{select coords between index/.style 2 args={
    x filter/.code={
        \ifnum\coordindex<#1\fi
        \ifnum\coordindex>#2\fi
    }
}}
\begin{document}

\title{A polyhedral discrete de Rham numerical scheme for the Yang--Mills equations}

\author[1]{J\'er\^ome Droniou}
\author[1]{Todd A. Oliynyk}
\author[1]{Jia Jia Qian}

\affil[1]{School of Mathematics, Monash University, Melbourne, Australia, \email{jerome.droniou@monash.edu}, \email{todd.oliynyk@monash.edu}, \email{jia.qian@monash.edu}}

\maketitle

\begin{abstract}
  ~
  We present a discretisation of the 3+1 formulation of the Yang--Mills equations in the temporal gauge, using a Lie algebra-valued extension of the discrete de Rham (DDR) sequence, that preserves the non-linear constraint exactly. In contrast to Maxwell's equations, where the preservation of the analogous constraint only depends on reproducing some complex properties of the continuous de Rham sequence, the preservation of the non-linear constraint relies for the Yang--Mills equations on a constrained formulation, previously proposed in \cite{Christiansen.Winther:06}. The fully discrete nature of the DDR method requires to devise appropriate constructions of the non-linear terms, adapted to the discrete spaces and to the need for replicating the crucial Ad-invariance property of the $L^2$-product. We then prove some energy estimates, and provide results of 3D numerical simulations based on this scheme. 
  \medskip\\
  \textbf{Key words}: Yang--Mills equations, constraint preservation, discrete polytopal complex, discrete de Rham method, stability.
    \medskip\\
  \textbf{MSC2010}: 65M12, 65M60, 85C50, 83-08.
\end{abstract}

%% \tableofcontents

\section{Introduction}\label{sec:intro}

In this work we propose a numerical method to approximate the Yang--Mills equations on a generic 3D polyhedra mesh. The scheme is based on the recent discrete de Rham polytopal complex, which has an arbitrary degree of accuracy and reproduces the calculus properties (and cohomology properties) of the continuous de Rham complex. Using an Lagrange-multiplier augmented formulation, the scheme satisfies a discrete version of the non-linear constraint condition associated to the Yang--Mills equations. We establish uniform $L^2$-bounds on the electric and magnetic field, showing the stability of the scheme on these variables, and present a series of numerical tests using the lowest-order version of the scheme on classical (tetrahedral, cubic) and polyhedral (Voronoi) meshes, showing the expected rates of convergence and the practical preservation of the discrete constraint.

Quantization of the Yang--Mills equations yields quantum field theories that govern the interactions of elementary particles, which, for a particular choice of the gauge group, form the basis of the Standard Model of particle physics. As a classical field theory, the Yang--Mills equations can be viewed as natural, non-linear generalization of Maxwell's equations, which coincide with Maxwell's equations for the gauge group $U(1)$. As with Maxwell's equations, solutions to the Yang-Mills equations are obtained by solving an initial value problem where the initial data cannot be chosen arbitrarily, but must satisfy certain \textit{constraint equations}. At the continuous level, these constraints, if satisfied initially, can be shown, as a consequence of the evolution equations, to \textit{propagate}, that is, they will continue to be satisfied for all later times. However, at the discrete level, constraint propagation is difficult to achieve exactly and the lack of perfect constraint propagation is a known source of instabilities that lead to exponential growth in errors in numerical simulations of the Yang-Mills. Similar
issues involving constraint propagation also occur in numerical simulations of Einstein's equations \cite{alic_et_al:2012,Brodbeck_et_al:1999,FrauendienerVogel:2005}. We view the Yang-Mills equations as an important model for testing new numerical methods that could eventually be applied to Einstein's equations.   

Constraint propagation in numerical schemes for the Maxwell's equations only relies on satisfying, at the discrete level, the calculus relation $\DIV\CURL=0$. Due to non-linearity that is present in the Yang-Mills equations for non-Abelian choices of the gauge group, such a discrete property is not sufficient to enforce exact constraint propagation for the scheme. There are two main approaches for addressing constraint propagation at the discrete level in the Yang-Mills equations, and more generally in non-linear evolution equations involving constraints such as Einstein's equations. The first approach is called \textit{constraint damping}, which has been influential in controlling the constraint violations in numerical simulations of Einstein's equations \cite{alic_et_al:2012,Brodbeck_et_al:1999,Gundlach_et_al:2005}; see also \cite{Calabrese:2004} for use of this approach in numerically solving Maxwell's equations. In this approach, 
suitable combinations of the constraint equations are added to the evolution equations in order to ensure that constraint violations
remain bounded during numerical simulations. When successful, constraint damping results in numerical solutions with acceptable amounts of constraint violating errors.

The second main approach to constraint propagation is to solve or otherwise enforce the constraint equations exactly at the discrete level. This is the approach we take here using a constrained formulation of the Yang-Mills equations that is inspired by the constrained version of the Yang-Mills equations from \cite{Christiansen.Winther:06}. Although, in theory, any constraint can be added to a system through the introduction of Lagrange multipliers, it must be noted that, when done in an improper way, such an addition can lead to destruction of the system's properties, such as energy conservation/decay, and lead to numerical instabilities, see \cite[Section IV.4]{Hairer.Lubich.ea:06} and \cite{Detweiler:1987} for a discussion of this phenomena in the context of Einstein's equations. However, the approach of \cite{Christiansen.Winther:06} that we use here does not present this issue, and leads to a constraint-preserving scheme that is also numerically stable (as can be seen through energy estimates on the potential and electric field).

The discrete de Rham (DDR) complex, developed in \cite{Di-Pietro.Droniou.ea:20,Di-Pietro.Droniou:21*2}, is a fully discrete version of the de Rham complex, which reproduces at the discrete level the calculus properties (e.g. $\CURL\GRAD=0$, $\DIV\CURL=0$, etc.), as well as the cohomological properties of the continuous complex \cite{Di-Pietro.Droniou.ea:22}, including its exactness properties in case of trivial domain topology. As demonstrated, e.g., in \cite{Di-Pietro.Droniou:21*1}, these properties are essential in some models to design robust schemes. The two main features of DDR is its arbitrary degree of accuracy, and its capability to handle generic polytopal meshes -- that is, meshes made of generic polytopal elements (as opposed to finite element methods (FEM) which are mainly restricted to tetrahedral or hexahedral meshes). These flexibilities enable the resulting schemes to better capture steep behaviour of solutions at a reduced computational cost \cite{Di-Pietro.Droniou:21}; additionally, the higher-order version of DDR is amenable to generic cost-reducing techniques such as static condensation and -- probably more interesting for non-linear models such as the Yang--Mills equations -- serendipity processes \cite{Di-Pietro.Droniou:22}. We note that the DDR approach can be bridged with the virtual element method (VEM), see \cite{Beirao-da-Veiga.Brezzi.ea:18*2}, as recently shown in \cite{Beirao-da-Veiga.Dassi.ea:22}.

The first contribution of this work is the design and stability analysis of a DDR scheme for the Yang--Mills equations. The usage of polytopal meshes for these equations provide increased flexibility to handle steep coefficients via local mesh refinement; such coefficients could appear in the presence of a background metric with high curvature (even though we consider, for ease of presentation, the case of a flat space in this work). The scheme we present can not only handle polytopal meshes, but can also be written at an arbitrary degree of accuracy, which enhances its capacity to capture fine behaviours of the solution.

The principle to obtain a scheme from the DDR complex is to replace, in the continuous weak formulation (based, here, on the potential and electric fields), the spaces, $L^2$-inner products and differential operators by the discrete DDR spaces, inner products and operators.
Due to the complex property of the DDR sequence, in the linear case (Maxwell equations), this leads to a scheme that automatically preserves the zero-divergence constraint. However, the general Yang--Mills equations have a non-linear constraint, whose preservation requires, besides the discrete version of the $\DIV\CURL=0$ relation, the introduction in the scheme of an additional equation and its related Lagrange multiplier; this was already noticed, e.g., in \cite{Christiansen.Winther:06} for methods using conforming complexes such as the classical low-order FEM complex on triangles.

As the DDR method is based on a fully discrete approach (discrete spaces that are not sub-spaces of the corresponding continuous ones), a challenge to overcome in the design of the scheme lies in the definition of adequate Lie brackets acting on the discrete DDR spaces that preserves the required Ad-invariance properties and enables the definition of a suitable magnetic field; such a challenge does not exist when considering conforming methods with explicit representation of the basis functions, such as the FEM in \cite{Christiansen.Winther:06} -- but these methods are restricted to specific underlying meshes, with far less flexibility than polytopal meshes.

Another noticeable contribution to our work is the usage of a fully non-linear (not staggered) scheme in time; such an approach allows for easy higher-accuracy in time (e.g., through Crank--Nicolson or Runge--Kutta discretisations). We also numerically demonstrate the efficiency of the standard Newton algorithm for solving this non-linear scheme: 3D simulations on a variety of polytopal meshes show that about 2-3 iterations are required for Newton to produce a solution with a very small residual. In passing, it should be mentioned that 3D simulations for the Yang--Mills equations are not commonly considered in the literature on mesh-based numerical schemes, with simulations often only run in 2D models \cite{Christiansen.Winther:06,Berchenko-Kogan.Stern:21}.

The paper is organised as follows. In Section \ref{sec:ym.equations} we recall the Yang--Mills equations in flat 3+1 formalism using the temporal gauge. Section \ref{sec:DDR} gives a brief presentation of the lowest-order discrete de Rham complex (spaces, discrete operators, inner products), starting from its scalar-valued version before showing how simple tensorisation gives a Lie algebra-valued complex. In Section \ref{sec:schemes} we show how schemes can be built from this complex, starting from the simpler case of the Maxwell equations before dealing with the non-linear Yang--Mills equations. After having recalled how the constraint is preserved in the continuous equations, we design a scheme for these equations that exactly preserves the constraint at the discrete level, which requires us to build appropriate discretisations of two types of non-linear terms involving the Lie bracket. We also establish energy estimates on the scheme. Section \ref{sec:tests} presents the results of 3D numerical simulations on a variety of meshes (tetrahedral, cubic and Voronoi tesselations); these tests show the practical efficiency of the Newton algorithm to solve the non-linear scheme, the expected order 1 rate of convergence of the approximate solution, and the preservation of the constraint up to round-off errors.
In Section \ref{sec:DDR.high.order} we show how the low-order Lie algebra DDR complex and resulting schemes built can be easily extended to arbitrary-order complexes and schemes that satisfy the same preservation of constraint and energy estimates as the low-order version. Finally, a conclusion is given in Section \ref{sec:conclusion}, highlighting possible extensions to other polytopal methods such as VEM, and future research questions on the topic.

\section{The Yang--Mills equations in flat $3+1$ formalism}\label{sec:ym.equations}

Let $G$ be a compact Lie group with the associated (finite dimensional) Lie algebra $\La$: a vector space equipped with a Lie bracket, that is, an antisymmetric bilinear map $[\cdot,\cdot]:\La\times\La\to\La$ satisfying the Jacobi identity: 
\begin{equation*}
[a,[b,c]]+[b,[c,a]]+[c,[a,b]]=0\qquad\forall a,b,c\in\La.
\end{equation*}
Let $\langle\cdot,\cdot\rangle:\La\times\La\to\Real$ be an Ad-invariant inner product on $\La$, which implies:
\begin{equation}\label{eq:ad.inv}
\langle[a,b],c\rangle=\langle a,[b,c]\rangle\qquad\forall a,b,c\in\La.
\end{equation}

For $U\subset\Real^3$ with the standard Euclidean metric, we denote by $\Hstar$ the associated Hodge star operator (linear map from $p$-forms to $(3-p)$-forms), and $\Lapf{U}$ the space of Lie algebra-valued $p$-forms on $U$. Taking a basis $(e_I)_{I=1}^{\dim \mathfrak{g}}$ of the Lie algebra, we can decompose any $\phi\in\Lapf{U}$ as $\phi=\phi^I\otimes e_I$, where $\phi^I\in\pf{U}$ and, here and in the following, we adopt the Einstein summation convention (summing over indices repeated at the top and bottom). We can then define the exterior derivative $\Ed:\Lapf{U}\to\Lapf[p+1]{U}$ by $\Ed\phi=\Ed\phi^I\otimes e_I$, and the $L^2$-inner product on $\Lapf{U}$ as
\begin{equation*}
  \int_U\phi^I\wedge\Hstar\psi^J\langle e_I,e_J\rangle,\qquad\forall\phi,\psi\in\Lapf{U}.
\end{equation*}
The bracket of Lie algebra-valued $p$ and $q$ forms is the Lie algebra-valued $(p+q)$-form given by
\begin{equation*}
  [\phi\wedge\psi]\coloneq\phi^I\wedge\psi^J[e_I,e_J],\qquad\forall\phi\in\Lapf{U},\quad\forall\psi\in\Lapf[q]{U}.
\end{equation*}
These definitions are of course independent of the choice of basis on $\La$.

For a local gauge potential $A\in\Lapf[1]{U}$, the gauge covariant derivative is $\Gcd{A}:\Lapf{U}\to\Lapf[p+1]{U}$, defined by $\Gcd{A}\psi=\Ed\psi+[A\wedge\psi]$, and we denote by $\Gcd{A}^*$ its adjoint with respect to the $L^2$-inner product. A 3+1 first order formulation of the Yang--Mills equations, in the temporal gauge, is: Find $A,E\in C^1([0,T];\Lapf[1]{U})$ such that, setting $B=\Ed A + \frac12 [A\wedge A]$,
\begin{equation}\label{eq:ym.df}
    \partial_t A=-E\quad\mbox{ and }\quad\partial_t E=\Gcd{A}^* B.
\end{equation}
One can show that this system preserves the constraint $\Gcd{A}^* E=0$ at all times if it is satisfied initially (see Proposition \ref{prop:ym.preservation.constraint} below for a proof for the weak formulation).

To make it easier for our discretisation, which is based on a de Rham complex of scalar and vector valued functions with the operators $\GRAD, \CURL, \DIV$, we will reformulate the Yang-Mills equations in vector notations, which will then be used in the remainder of the paper.  

Denoting our standard Euclidean metric by $g$ ($g_{\mu\nu}=\delta_{\mu\nu}$), we use the musical isomorphism to identify vector fields and 1-forms: $\flat:\mathfrak{X}(U)\otimes\La\to\Lapf[1]{U}, X\mapsto X^\flat$, where the components of $X^\flat$ are written without the flat, $X_\mu\coloneq(X^\flat)_\mu=g_{\mu\nu}X^\nu=X^\mu$. The inverse operator $\sharp\coloneq\flat^{-1}$ sends a 1-form $\omega$ to the vector field $\omega_\sharp$, with again components written without the musical symbol, $\omega^\mu\coloneq(\omega_\sharp)^\mu=g^{\mu\nu}\omega_\nu=\omega_\mu$, and $g^{\mu\nu}=\delta^{\mu\nu}$ are components of the inverse metric $g^{-1}$, characterised by $g_{\mu\nu}g^{\nu\gamma}=\delta_\mu^\gamma$. 
We note that, given the choice of the standard Euclidean metric here, these identifications and operators are somewhat trivial, but we made them explicit for the sake of clarity (and for ease of generalisation in case a different metric is selected).

We then have the correspondence
\begin{alignat*}{4}
  (\Ed f)_\sharp ={}&\GRAD f, \qquad{} &\forall f&\in C^\infty(U)\otimes\La,\\
  (\Hstar\Ed\phi)_\sharp={}&\CURL\phi_\sharp, \qquad{}&\forall\phi &\in\Lapf[1]{U},\\
  \Hstar\Ed\psi={}&\DIV(\Hstar\psi)_\sharp,\qquad{}&\forall\psi&\in\Lapf[2]{U},
\end{alignat*}
where the last equation is also equivalent to
\begin{equation*}
  \Hstar\Ed\Hstar\phi=\DIV\phi_\sharp,\qquad\forall \phi\in\Lapf[1]{U}.
\end{equation*}
This implies that, under vanishing natural boundary conditions, the adjoint $\Ed^*$ of $d$ has relations 
\begin{alignat*}{4}
  \Ed^*\phi={}& -\DIV\phi_\sharp, \quad& \forall \phi\in\Lapf[1]{U},\\
  (\Ed^*\psi)_\sharp={}&\CURL(\Hstar\psi)_\sharp, \quad& \forall \psi\in\Lapf[2]{U},\\
  (\Hstar\Ed^*\varphi)_\sharp={}&-\GRAD\Hstar\varphi,\quad&\forall \varphi\in\Lapf[3]{U},
\end{alignat*}
Using the Ad-invariance property \eqref{eq:ad.inv}, the wedge bracket $[A\wedge\cdot]$ has the adjoint
\begin{alignat*}{2}
  [A\wedge\cdot]^*\phi={}&(-1)^{p+1}\Hstar[A\wedge \Hstar\phi], \qquad \forall \phi\in\Lapf{U}.
\end{alignat*} 
We will now indicate Lie algebra-valued vector fields using boldface, and for these vector fields $\bvec{v}$, the notation $\bvec{v}_\mu$ continues to refer to $(\bvec{v}^\flat)_\mu=g_{\mu\nu}\bvec{v}^\nu$, where the index is lowered through the metric $g$. The dot product of vectors can then be written as $\bvec{v}\cdot\bvec{w}=g_{\mu\nu}\bvec{v}^\mu\bvec{w}^\nu=\bvec{v}^\mu \bvec{w}_\mu$, whenever the multiplication is defined. We require vector versions of the wedge bracket, each corresponding to particular combinations of $p$, $\Hstar$ and $[\cdot\wedge\cdot]$. For a Lie algebra-valued vector field $\bvec{v}$, we define the maps
\begin{alignat*}{4}
  [\cdot,\cdot]:{}&(\mathfrak{X}(U)\otimes\La)\times(C^\infty(U)\otimes\La)\to\mathfrak{X}(U)\otimes\La, \quad &[\bvec{v}, f]\coloneq{}&[\bvec{v}^\flat\wedge f]_\sharp,\\
  \Hstar[\cdot,\cdot]:{}&(\mathfrak{X}(U)\otimes\La)\times(\mathfrak{X}(U)\otimes\La)\to\mathfrak{X}(U)\otimes\La, \quad&\Hstar[\bvec{v},\bvec{w}]\coloneq{}&\Hstar[\bvec{v}^\flat\wedge\bvec{w}^\flat]_\sharp,\\
  \Hstar[\cdot,\Hstar\cdot]:{}&(\mathfrak{X}(U)\otimes\La)\times(\mathfrak{X}(U)\otimes\La)\to C^\infty(U)\otimes\La, \quad&\Hstar[\bvec{v},\Hstar\bvec{w}]\coloneq{}&\Hstar[\bvec{v}^\flat\wedge\Hstar\bvec{w}^\flat].
\end{alignat*}
From these definitions, these maps are bilinear and the (Lie algebra-valued) components of the resulting vector fields are (with the Lie bracket of functions on the right)
\begin{alignat*}{4}
  [\bvec{v}, q]^\alpha={}&[\bvec{v}^\alpha, q],\quad
  \Hstar[\bvec{v},\bvec{w}]^\alpha={}&\varepsilon\indices{^\alpha_{\mu\nu}}[\bvec{v}^\mu,\bvec{w}^\nu],\quad
  \Hstar[\bvec{v},\Hstar\bvec{w}]={}&[\bvec{v}^\alpha,\bvec{w}_\alpha],
\end{alignat*}
where $\varepsilon_{\alpha\mu\nu}$ is the volume form; these equations clearly show that the second map is symmetric and the third is anti-symmetric. The $L^2$-inner products for scalar and vector Lie algebra-valued functions are, respectively,
\begin{equation*}
  \int_U \langle q,r\rangle\coloneq\int_U q^Ir^J\langle e_I,e_J\rangle, \qquad
  \int_U \langle \bvec{v},\bvec{w}\rangle\coloneq\int_U\langle\bvec{v}^\mu,\bvec{w}_\mu\rangle=\int_U(\bvec{v}^\mu)^I(\bvec{w}_\mu)^J\langle e_I, e_J \rangle.
\end{equation*}

Now we can recast the Yang--Mills equations \eqref{eq:ym.df} using the vector fields $\ymA\coloneq A_\sharp$, $\ymE\coloneq E_\sharp$, and $\ymB\coloneq(\Hstar B)_\sharp=\CURL\ymA+\frac12\ebkt{\ymA}{\ymA}$, and forget completely the underlying differential forms: Find $\ymA,\ymE\in C^1([0,T];\mathfrak{X}(U)\otimes\La)$ such that
\begin{subequations}\label{eq:ym}
\begin{alignat}{3}
    \partial_t\ymA&=-\ymE \quad&\mbox{in }U, \\ 
    \partial_t\ymE&=\CURL\ymB+\ebkt{\ymA}{\ymB}\quad&\mbox{in }U,
    \label{eq:ym.ev.E}
\end{alignat}
with the natural boundary conditions
\begin{equation}\label{eq:bcs}
  \ymB\times\normal=0\quad\mbox{ and }\quad\ymE\cdot\normal=0\quad\mbox{on }\partial U
\end{equation}
and initial conditions
\begin{equation}
  \ymA(0)=\ymA_0\quad\mbox{ and }\quad\ymE(0)=\ymE_0\quad\mbox{ in $U$}.
\end{equation}
\end{subequations}
The associated constraint $\Gcd{A}^* E=0$ is written
\begin{equation}\label{eq:ym.const.E}
  \DIV\ymE+\ebkt{\ymA}{\Hstar\ymE}=0\quad\mbox{in } U.
\end{equation}

%% DDR complexes
\section{Lowest order Discrete de Rham complex}\label{sec:DDR}

We describe here the lowest-order version of the discrete de Rham complex, which is at the core of our polytopal method for the Yang--Mills equations.

\subsection{Notations}\label{sec:mesh}

Let $U\subset\Real^3$ denote a connected polyhedral domain. We use the mesh definitions and notations of \cite{Di-Pietro.Droniou:21*2}. The mesh is a partition of the domain into polyhedral elements gathered in the set $\Th$; the mesh faces are gathered in $\Fh$, the mesh edges in $\Eh$ and the mesh vertices in $\Vh$. For each $\sfP\in\Mh\coloneq\Th\cup\Fh\cup\Eh\cup \Vh$ and each $\mathcal X\in\{\mathcal T,\mathcal F,\mathcal E,\mathcal V\}$, we denote by $\mathcal X_\sfP$ the set of mesh entities $\sfQ$ identified by $\mathcal X$ such that $\sfQ\subset\sfP$ (if $\sfP$ has a higher dimension than $\sfQ$) or $\sfP\subset\sfQ$ (otherwise). $h_{\sfP}$ is the diameter of $\sfP$, and we set $h\coloneq\max_{T\in\Th}h_T$.

Faces and edges are oriented: we select for each $F\in\Fh$ a unit normal $\normal_F$ to $F$, and for each $E\in\Eh$ a unit tangent $\tangent_E$ to $E$. The relative orientation of a face $F\in\FT$ in an element $T$ is $\omega_{TF}=+1$ if $\normal_F$ points outside $T$, and $\omega_{TF}=-1$ otherwise. For each $F\in\Fh$ and $E\in\EF$, we denote by $\normal_{FE}$ the unit normal to $E$ in the plane spanned by $F$ such that $(\tangent_E,\normal_{FE},\normal_F)$ forms a right-handed system of coordinate; the orientation $\omega_{FE}\in\{-1,+1\}$ of $E$ respective to $F$ is set such that $\omega_{FE}\normal_{FE}$ points outside $F$ (in the plane spanned by $F$).

For each $F\in\Fh$, $\GRAD_F$ and $\DIV_F$ are the tangent gradient and divergence operators and, for $r:F\to\Real$ and $\bvec{z}:F\to\Real^2$ smooth enough, we let
$\VROT_F r\coloneq (\GRAD_F r)^\perp$ and
$\ROT_F\bvec{z}=\DIV_F(\bvec{z}^\perp)$,
with $\perp$ denoting the rotation of angle $-\frac\pi2$ in the tangent space to $F$ oriented counter-clockwise with respect to $\normal_F$.

For all $\sfP\in\Mh$ and natural number $k\ge 0$, $\Poly{k}(\sfP)$ is the space spanned by the restriction to $\sfP$ of polynomials in $\Real^3$ of degree $\le k$; this space is isomorphic to the space of $d$-valuate polynomials, where $d$ is the dimension of $\sfP$.
The $L^2$-orthogonal projector onto $\Poly{k}(\sfP)$ is $\lproj{k}{\sfP}:L^2(\sfP)\to\Poly{k}(\sfP)$. The spaces of vector-valued polynomials $\sfP\to\Real^d$ (with, as above, $d$ being the dimension of $\sfP$) and corresponding projectors are denoted in the same way but using boldface fonts.

\subsection{Scalar DDR complex}\label{sec:scalar.DDR}

The DDR complex corresponds to a discrete version of de Rham complex, in which the spaces $H^1(U)$, $\Hcurl{U}$, $\Hdiv{U}$ and $L^2(U)$ are replaced by discrete counterparts, linked together with discrete operators mimicking $\GRAD$, $\CURL$ and $\DIV$. We give here a brief overview of the lowest-order version of the DDR complex, as this is the one we will use to design a scheme for the Yang--Mills equations in Section \ref{sec:ym}; the extension to higher-order versions is covered in Section \ref{sec:DDR.high.order}, and we refer to \cite{Di-Pietro.Droniou.ea:20,Di-Pietro.Droniou:21*2} for a more detailed presentation. For the lowest-order version of the DDR, the design of spaces and discrete vector calculus operators are strongly related to that encountered in Compatible Discrete Operators (CDO) and Discrete Geometric Analysis theories (see \cite{Bonelle:14} and reference therein), but the approach to the construction of the inner products in the spaces differ: where, for example, CDO bases this design on the choice of appropriate discrete Hodge operators, DDR rather creates potential reconstructions and inner products based on their integrals and stabilisation terms (as in other arbitrary-order polytopal methods, such as Hybrid High-Order or Virtual Element Methods \cite{Di-Pietro.Droniou:20,Beirao-da-Veiga.Brezzi.ea:14}). An added advantage of this potential-based approach is that it readily provides the required tools to discretise the non-linear terms in the Yang--Mills equations.

The driving design behind the discrete spaces and operators of the DDR complex are integration-by-parts formula for the vector calculus operators. These formulas justify the choice of degrees of freedom in the spaces, the construction of the discrete differential operators, as well as the design of the potentials (reconstructions of piecewise polynomial functions on the faces or elements based on the degrees of freedom in each space).

\subsubsection{Spaces and discrete vector calculus operators}

The spaces and interpolators (which give meaning to the components of the vectors in the spaces) of the lowest-order DDR complex are as follows. The discrete counterpart of the $H^1(U)$ space is made of the continuous piecewise linear functions on the mesh skeleton:
\begin{equation*}%\label{eq:Xgrad.h}
  \Xgrad{h}\coloneq\Big\{
  q_{\Eh}\in C^0(\Eh)\st
    \text{$q_E\coloneq (q_{\Eh})_{|E}\in\Poly{1}(E)$ for all $E\in\Eh$}
    \Big\}.
\end{equation*}
The interpolator $\Igrad{h}:C^0(\overline{U})\to\Xgrad{h}$ is defined such that, for all $v\in C(\overline{U})$, $\Igrad{h}v$ is the unique piecewise polynomial on $\Eh$ such that $(\Igrad{h}v)(\bvec{x}_V)=v(\bvec{x}_V)$ for all $V\in\Vh$, where $\bvec{x}_V$ is the coordinate of $V$. 

The discrete $\Hcurl{U}$ space is
\begin{equation*}%\label{eq:Xcurl.h}
  \Xcurl{h}\coloneq\Big\{
    \uvec{v}_h
    =\big(
    (v_E)_{E\in\Eh}
    \big)\st\text{$v_E\in\Real$ for all $E\in\Eh$}\Big\}
\end{equation*}
and we interpolate $\bvec{v}\in \bvec{C}^0(\overline{U})\to \Xcurl{h}$ by setting $\Icurl{h}\bvec{v}=(\lproj{0}{E}(\bvec{v}\cdot\tangent_E))_{E\in\Eh}$; this shows that the components $(v_E)_{E\in\Eh}$ of a vector in $\Xcurl{h}$ play the role of tangential values along the edges.

The discrete $\Hdiv{U}$ space is
\begin{equation*}%\label{eq:Xdiv.h}
  \Xdiv{h}\coloneq\Big\{
    \uvec{w}_h
    =\big((w_F)_{F\in\Fh}\big)\st
    \text{$w_F\in\Real$ for all $F\in\Fh$}
    \Big\}.
\end{equation*}
For $\bvec{w}\in H^1(U)$, we set $\Idiv{h}\bvec{w}=(\lproj{0}{F}(\bvec{w}\cdot\normal_F))_{F\in\Fh}$; the components $(w_F)_{F\in\Fh}$ in $\Xdiv{h}$ therefore play the role of normal fluxes across the faces.

\begin{remark}[Simpler presentation of the spaces]
Noticing that an element in $\Xgrad{h}$ is entirely determined by its values at the vertices, we have $\Xgrad{h}\approx \Real^{\Vh}$, $\Xcurl{h}\approx \Real^{\Eh}$ and $\Xdiv{h}\approx \Real^{\Fh}$. We chose the slightly longer presentations
above as they also allow us to fix the notations for the components of each vector onto specific mesh entities.
\end{remark}

The discrete de Rham sequence, which is a complex with the same cohomology as the continuous de Rham complex (it is in particular exact if $U$ is contractible) \cite{Di-Pietro.Droniou.ea:20,Di-Pietro.Droniou.ea:22}, then reads
\begin{equation*}%\label{eq:global.sequence.3D}
  \begin{tikzcd}
    \Real\arrow{r}{\Igrad{h}} & \Xgrad{h}\arrow{r}{\uGh} & \Xcurl{h}\arrow{r}{\uCh} & \Xdiv{h}\arrow{r}{\Dh} & \Poly{0}(\Th)\arrow{r}{0} & \{0\}
  \end{tikzcd}
\end{equation*}
where $\Poly{0}(\Th)$ is the space of piecewise constant functions on $\Th$, and the discrete gradient, curl and divergence are defined by
\begin{alignat}{4}
\uGh\underline{q}_h{}&=(q_E')_{E\in\Eh}&\quad\forall\underline{q}_h\in\Xgrad{h},\label{eq:def.uGh}\\
\uCh\uvec{v}_h{}&=(\CF\uvec{v}_F)_{F\in\Fh}\quad\mbox{ with }\quad\CF\uvec{v}_F=-\sum_{E\in\EF}\omega_{FE}|E|v_E\quad\forall F\in\Fh&\quad\forall\uvec{v}_h\in\Xcurl{h},\nonumber\\
\Dh\uvec{w}_h{}&=(\DT\uvec{w}_T)_{T\in\Th}\quad\mbox{ with }\quad\DT\uvec{w}_T=\frac{1}{|T|}\sum_{F\in\EF}\omega_{TF}|F|w_F\quad\forall T\in\Th&\quad\forall\uvec{w}_h\in\Xdiv{h}.\nonumber
\end{alignat}
In \eqref{eq:def.uGh}, for each $E\in\Eh$ the derivative $q_E'$ of $q_E$ is taken in the direction of $\tangent_E$. The interpretations of the components in $\Xcurl{h}$ and $\Xdiv{h}$ show that the definition of the face scalar curl $\CF$ and element divergence $\DT$ are based on integration-by-parts.

\subsubsection{Potential reconstructions and inner products}

For all $\sfP\in\Th\cup\Fh$, we define $\Xgrad{\sfP}$, $\Xcurl{\sfP}$ and $\Xdiv{\sfP}$ as the respective spaces obtained restricting the vectors of $\Xgrad{h}$, $\Xcurl{h}$ and $\Xdiv{h}$ to the edges (for $\Xgrad{h}$ and $\Xcurl{h}$) or faces (for $\Xdiv{h}$) of $\sfP$. We present here a reconstruction of face/element functions (potentials) based on the degrees of freedom in each local space; these potentials are then used to design $L^2$-like inner products in the discrete spaces.

For each $F\in\Fh$ and $T\in\Th$, the face (tangential) gradient $\cGF:\Xgrad{F}\to H_F$ (where $H_F$ is the plane spanned by $F$), scalar trace $\trF:\Xgrad{F}\to\Poly{1}(F)$, element gradient $\cGT:\Xgrad{T}\to\Real^3$ and scalar potential $\Pgrad:\Xgrad{T}\to\Poly{1}(T)$ are defined such that: for all $\underline{q}_F\in\Xgrad{F}$ and $\underline{q}_T\in\Xgrad{T}$,
\begin{alignat*}{4}
\cGF\underline{q}_F\cdot\bvec{\xi}={}&\frac{1}{|F|}\sum_{E\in\EF}\omega_{FE}\int_E q_E\bvec{\xi}\cdot\normal_{FE}&\qquad\forall\bvec{\xi}\in H_F,\\
\int_F \trF\underline{q}_F\DIV_F\bvec{v}_F={}& -\int_F \cGF\underline{q}_F\cdot\bvec{v}_F+\sum_{E\in\EF}\omega_{FE}\int_E q_E \bvec{v}_F\cdot\normal_{FE}&\qquad\forall \bvec{v}_F\in\cRoly{2}(F),\\
\cGT\underline{q}_T\cdot\bvec{\xi}={}&\frac{1}{|T|}\sum_{F\in\FT}\omega_{TF}\int_F \trF\underline{q}_F\,\bvec{\xi}\cdot\normal_{F}&\qquad\forall\bvec{\xi}\in\Real^3,\\
\int_T \Pgrad\underline{q}_T\DIV\bvec{v}_T={}& -\int_T \cGT\underline{q}_T\cdot\bvec{v}_T+\sum_{F\in\FT}\omega_{TF}\int_F \trF\underline{q}_F\, \bvec{v}_T\cdot\normal_{F}&\qquad\forall \bvec{v}_T\in\cRoly{2}(T).
\end{alignat*}
In the equations above, for $\sfP\in\{F,T\}$ we have set $\cRoly{2}(\sfP)=(\bvec{x}-\bvec{x}_{\sfP})\Poly{1}(\sfP)$, where $\bvec{x}_{\sfP}$ is a fixed point in $\sfP$, and the definition of the scalar trace and potential are justified by the fact that $\DIV_\sfP:\cRoly{2}(\sfP)\to\Poly{1}(\sfP)$ is an isomorphism.

For each $F\in\Fh$ and $T\in\Th$, the tangential trace $\trFt:\Xcurl{F}\to H_F$, element curl $\cCT:\Xcurl{T}\to\Real^3$ and vector potential $\Pcurl:\Xcurl{T}\to\Real^3$ are such that: for all $\uvec{v}_F\in\Xcurl{F}$ and all $\uvec{v}_T\in\Xcurl{T}$,
\begin{alignat*}{4}
\trFt\uvec{v}_F\cdot\VROT_F r_F={}& \frac{1}{|F|}\int_F \CF\uvec{v}_Fr_F+\frac{1}{|F|}\sum_{E\in\EF}\omega_{FE}\int_E v_Er_F&\quad\forall r_F\in\Poly{0,1}(F),\\
\cCT\uvec{v}_F\cdot\bvec{\xi}={}&\frac{1}{|T|}\sum_{F\in\FT}\omega_{TF}|F|\trFt\uvec{v}_F\cdot (\bvec{\xi}\times\normal_F)&\quad\forall\bvec{\xi}\in \Real^3,\\
\Pcurl\uvec{v}_T\cdot\CURL\bvec{w}_T={}&\frac{1}{|T|}\int_T\cCT\uvec{v}_T\cdot\bvec{w}_T-\frac{1}{|T|}\sum_{F\in\FT}\omega_{TF}\int_F\trFt\uvec{v}_F\cdot (\bvec{w}_T\times\normal_F)&\quad\forall\bvec{w}_T\in\cGoly{1}(T)
\end{alignat*}
where $\Poly{0,1}(F)=\{r_F\in\Poly{1}(F)\,:\,\int_F r_F=0\}$, while $\cGoly{1}(T)=(\bvec{x}-\bvec{x}_T)\times \Real^3$ (note that $\VROT_F:\Poly{0,1}(F)\to H_F$ and $\CURL:\cGoly{1}(T)\to\Real^3$ are isomorphisms).

For each $T\in\Th$, the vector potential reconstruction $\Pdiv:\Xdiv{T}\to\Real^3$ is such that, for all $\uvec{w}_T\in\Xdiv{T}$,
\begin{equation*}
\Pdiv\uvec{w}_T\cdot\GRAD r_T=-\frac{1}{|T|}\int_T\DT\uvec{w}_T r_T+\frac{1}{|T|}\sum_{F\in\FT}\omega_{TF}\int_F w_F r_T\qquad\forall r_T\in\Poly{0,1}(T).
\end{equation*}

In each DDR space, the discrete $L^2$-like inner product is then defined patching together local contributions built from the element potential reconstruction and stabilisation terms (the role of which is to control the differences between traces of the element potential and face/edge unknowns). So, for $\bullet\in\{\GRAD,\CURL,\DIV\}$ and $\underline{\mu}_h,\underline{\zeta}_h\in\Xbullet{h}$, we set
\[
(\underline{\mu}_h,\underline{\zeta}_h)_{\bullet,h}=\sum_{T\in\Th}\left(\int_T \Pbullet\underline{\mu}_T\,\Pbullet\underline{\zeta}_T
+\mathrm{s}_{\bullet,T}(\underline{\mu}_T,\underline{\zeta}_T)\right),
\]
where, for $T\in\Th$,
\begin{alignat*}{2}
\mathrm{s}_{\GRAD,T}(\underline{q}_T,\underline{r}_T)={}&\sum_{F\in\FT}h_F\int_F(\Pgrad\underline{q}_T-\trF\underline{q}_F)
(\Pgrad\underline{r}_T-\trF\underline{r}_F)\\
&+\sum_{E\in\ET}h_E^2\int_E(\Pgrad\underline{q}_T-q_E)(\Pgrad\underline{r}_T-r_E),\qquad\forall\underline{q}_h,\underline{r}_h\in\Xgrad{h},\\
\mathrm{s}_{\CURL,T}(\uvec{v}_T,\uvec{w}_T)={}&\sum_{F\in\FT}h_F\int_F((\Pcurl\uvec{v}_T)_{{\rm t}, F}-\trFt\uvec{v}_F)\cdot
((\Pcurl\uvec{w}_T)_{{\rm t}, F}-\trFt\uvec{w}_F)\\
&+\sum_{E\in\ET}h_E^2\int_E(\Pcurl\uvec{v}_T\cdot\tangent_E-v_E)(\Pcurl\uvec{w}_T\cdot\tangent_E-w_E),\quad\forall\uvec{v}_T,\uvec{w}_T\in\Xcurl{T}\\
\mathrm{s}_{\DIV,T}(\uvec{v}_T,\uvec{w}_T)={}&\sum_{F\in\FT}h_F\int_F(\Pdiv\uvec{v}_T\cdot\normal_F-v_F)
(\Pdiv\uvec{w}_T\cdot\normal_F - w_F),\quad\forall\uvec{v}_T,\uvec{w}_T\in\Xdiv{T},
\end{alignat*}
where $\bvec{\xi}_{{\rm t}, F}\coloneq\normal_F\times(\bvec{\xi}\times\normal_F)$ is the tangential component to $F$ of a vector $\bvec{\xi}\in\Real^3$.

\subsection{Lie algebra-valued DDR complex}\label{sec:LADDR}

There is a natural extension of the DDR complex to the de Rham complex with Lie algebra-valued function spaces, which we will call the LADDR (Lie algebra-valued DDR) complex.

\subsubsection{Spaces and extended operators} 

The spaces $\LaXgrad{h}$, $\LaXcurl{h}$ and $\LaXdiv{h}$, which are discrete versions of the function spaces $\LaHgrad{U}$, $\LaHcurl{U}$ and $\LaHdiv{U}$, have the following representations
\begin{alignat*}{2}
\LaXgrad{h}\coloneq&\Big\{
q_{\Eh}\in C^0(\Eh;\La)\st
  \text{$q_E\coloneq (q_{\Eh})_{|E}\in\Poly{1}(E;\La)$ for all $E\in\Eh$}
  \Big\},\\
\LaXcurl{h}\coloneq&\Big\{
  \uvec{v}_h
  =\big(
  (v_E)_{E\in\Eh}
  \big)\st\text{$v_E\in\La$ for all $E\in\Eh$}\Big\},\\
\LaXdiv{h}\coloneq&\Big\{
  \uvec{w}_h
  =\big((w_F)_{F\in\Fh}\big)\st
  \text{$w_F\in\La$ for all $F\in\Fh$}
  \Big\},
\end{alignat*}
where $C^0(\Eh;\La)$ (resp.~$\Poly{1}(E;\La)$) is the set of continuous functions $\Eh\to\La$ (resp.~linear functions $E\to\La$).

For all scalar operators $L:X\to Y$ in the DDR section, we denote the corresponding LADDR operators by $L^\La:X\otimes\La\to Y\otimes\La$, defined by setting $L^\La(v)=(Lv^I)\otimes e_I$, for all $v=v^I\otimes e_I\in X\otimes\La$. It can easily be checked using the linearity of $L$ that this definition does not depend on the choice of the basis $(e_I)_I$ in $\La$.

This definition and the cancellation property of the DDR complex ensure that the LADDR sequence
\begin{equation*}
  \begin{tikzcd}
    \Real\otimes\La\arrow{r}{\LaIgrad{h}} & \LaXgrad{h}\arrow{r}{\LauGh} & \LaXcurl{h}\arrow{r}{\LauCh} & \LaXdiv{h}\arrow{r}{\LaDh} & \Poly{0}(\Th)\otimes\La\arrow{r}{0} & \{0\}
  \end{tikzcd}
\end{equation*}
also forms a complex.

From the definitions of the DDR operators, the potential reconstructions then satisfy the corresponding integral characterisations in the Lie algebra setting. For example, for each $F\in\Fh$, the tangential trace $\LatrFt:\LaXcurl{F}\to H_F\otimes\La$ is such that, for all $\uvec{v}_F\in\LaXcurl{F}$,
\begin{equation}\label{eq:tang.tr}
\langle\LatrFt\uvec{v}_F,{\VROT_F} r_F\rangle= \frac{1}{|F|}\int_F \langle\LaCF\uvec{v}_F,r_F\rangle+\frac{1}{|F|}\sum_{E\in\EF}\omega_{FE}\int_E \langle v_E,r_F\rangle\quad\forall r_F\in\Poly{0,1}(F)\otimes\La.
\end{equation}
We will also make use of global potential reconstructions, defined piecewise on the mesh: for $\bullet\in\{\GRAD,\CURL\}$ and $x_h\in\LaXbullet{h}$, we define $\LaPbulleth x_h$ on $U$ by $(\LaPbulleth x_h)_{|T}=\LaPbullet x_T$ for all $T\in\Th$.

The inner product for $\bullet\in\{\GRAD,\CURL,\DIV\}$ and $\underline{\mu}_h,\underline{\zeta}_h\in\Xbullet{h}\otimes\La$ is defined by
\begin{equation}\label{eq:def.la.inner}
(\underline{\mu}_h,\underline{\zeta}_h)_{\bullet,\La,h}=(\underline{\mu}^I_h,\underline{\zeta}^J_h)_{\bullet,h}\langle e_I,e_J\rangle.
\end{equation}

%% YM equations
\section{Schemes}\label{sec:schemes}

The schemes presented below are based on a (not necessarily uniform) partition of the time interval $[0,T]$, with timestep from $t^n$ to $t^{n+1}$ denoted by $\deltat^{n+\frac{1}{2}}\coloneq t^{n+1}-t^n$, and a given mesh $\Mh$ of the domain $U$ as in Section \ref{sec:DDR}.
The time-stepping is based on a $\theta$-scheme, with $\theta$ a fixed parameter in $[\frac12,1]$ ($\theta=\frac12$ corresponds to the Crank--Nicolson time stepping, $\theta=1$ to the implicit Euler time stepping). We also assume that discrete initial conditions $(\underline{\ymA}_h^0,\underline{\ymE}_h^0)\in (\LaXcurl{h})\times (\LaXcurl{h})$ are available; these can typically be chosen as
\begin{equation}\label{eq:ic.interpolated}
\underline{\ymA}_h^0=\LaIcurl{h}\ymA_0\,,\quad \underline{\ymE}_h^0=\LaIcurl{h}\ymE_0,
\end{equation}
but see also the discussion before Proposition \ref{prop:energy.lm.ym}.

\subsection{Maxwell equations}\label{sec:maxwell}

As an introductory example, we first consider the simpler case of the Maxwell equations of electromagnetism, which correspond to the Yang--Mills equations with Abelian Lie group $U(1)$. The Lie algebra $\mathfrak{u}(1)$ of the group $U(1)$ is commutative, and as such, has a trivial Lie bracket. The equations \eqref{eq:ym} therefore reduce to the following real-valued relations for the electric field $\ymE$ and the magnetic field $\ymB$:
\[
\partial_t\ymE-\CURL\ymB=0\quad\mbox{ and }\quad\partial_t\ymB+\CURL\ymE=0.
\]
The associated constraints are
\[
\DIV\ymE=0\quad\mbox{ and }\quad\DIV\ymB=0.
\]
The preservation of the first constraint, for example, follows by taking the divergence of the first equation:
$0=\DIV(\partial_t\ymE-\CURL\ymB)=\partial_t (\DIV\ymE)-\cancel{\DIV\CURL\ymB}$, where the cancellation comes from the complex property $\DIV\CURL=0$ of the de Rham complex; this shows that if $\DIV\ymE=0$ at $t=0$, then $\DIV\ymE=0$ at any time.

A weak formulation of the Maxwell problem, still under zero natural boundary conditions, is obtained by substituting $\ymB=\CURL\ymA$ into the evolution equation for $\ymE$, multiplying by a test function and integrating by parts: Find smooth enough functions $(\ymA,\ymE):[0,T]\to\Hcurl{U}\times\Hcurl{U}$ such that
\begin{subequations}\label{eq:mx.weak}
\begin{alignat}{4}
\partial_t\ymA={}&-\ymE,\\
\label{eq:mx.weak.1}
\int_U\langle\partial_t\ymE,\bvec{v}\rangle={}&\int_U\langle\CURL\ymA,\CURL \bvec{v}\rangle,&\qquad\forall\bvec{v}\in\Hcurl{U}.
\end{alignat}
\end{subequations}
Setting $\bvec{v}=\GRAD q$ for an arbitrary $q\in H^1(U)$, we see that \eqref{eq:mx.weak.1} preserves the following weak version of ``$\DIV\ymE=0$'' (together with the boundary condition $\ymE\cdot\normal=0$ from \eqref{eq:bcs}), in the sense that if this equation holds at $t=0$ it holds for any time:
\begin{equation*}%\label{eq:mx.const}
\int_U\langle\ymE,\GRAD q\rangle=0,\qquad\forall q\in H^1(U).
\end{equation*}

The DDR-based discrete scheme for \eqref{eq:mx.weak} with $\theta$-scheme time stepping is: Find families $(\underline{\ymA}_h^n)_n$, $(\underline{\ymE}_h^n)_n$, where $(\underline{\ymA}_h^n,\underline{\ymE}_h^n)\in\Xcurl{h}\times\Xcurl{h}$, such that for all $n$,
\begin{subequations}\label{eq:mx.scheme}
\begin{alignat}{4}
\delta_t^{n+1}\underline{\ymA}_h&=-\underline{\ymE}_h^{n+\theta},\\
(\delta_t^{n+1}\underline{\ymE}_h,\uvec{v}_h)_{\CURL,h}&=(\uCh\underline{\ymA}_h^{n+\theta},\uCh\uvec{v}_h)_{\CURL,h},&\qquad\forall\uvec{v}_h\in\Xcurl{h},\label{eq:mx.scheme.2}
\end{alignat}
\end{subequations}
where, for a family $(\bvec{Z}^n)_n$, we define
\begin{equation*}%\label{def:deltat.theta}
\delta_t^{n+1}\bvec{Z}=\frac{1}{\deltat^{n+\frac{1}{2}}}(\bvec{Z}^{n+1}-\bvec{Z}^n),\quad
\bvec{Z}^{n+\theta}=\theta\bvec{Z}^{n+1}+(1-\theta)\bvec{Z}^n.
\end{equation*}
As for the continuous model, plugging $\uvec{v}_h=\uGh\underline{q}_h$ in \eqref{eq:mx.scheme.2} for a generic $\underline{q}_h\in\Xgrad{h}$ and using the complex property $\uCh\uGh=\uvec{0}_h$ of the DDR sequence, we see that the quantity $(\underline{\ymE}_h^n,\uGh \underline{q}_h)_{\CURL,h}$ is preserved throughout the evolution.

\subsection{Yang--Mills equations}\label{sec:ym}

\subsubsection{Weak formulation}

For most Lie groups, the Lie bracket of the Lie algebra does not vanish as in electromagnetism. We get a weak formulation of the Yang--Mills problem, with zero natural boundary conditions, by again multiplying the evolution equation \eqref{eq:ym.ev.E} by a test function, and using integration by parts to move the $\CURL$. To obtain the weak formulation, we also use the following property: for all Lie algebra-valued vector fields $(\bvec{u},\bvec{v},\bvec{w}$), using the Ad-invariance \eqref{eq:ad.inv} and the symmetry properties of the volume form, we have
\begin{alignat*}{4}
\int_U\langle\ebkt{\bvec{u}}{\bvec{v}},\bvec{w}\rangle=\int_U\langle\varepsilon\indices{^\alpha_{\mu\nu}}[\bvec{u}^\mu,\bvec{v}^\nu],\bvec{w}_\alpha\rangle&=\int_U\langle\bvec{u}^\mu,\varepsilon\indices{^\alpha_{\mu\nu}}[\bvec{v}^\nu,\bvec{w}_\alpha]\rangle\\
&=\int_U\langle\bvec{u}_\mu,\varepsilon\indices{^\mu_{\nu\alpha}}[\bvec{v}^\nu,\bvec{w}^\alpha]\rangle=\int_U\langle\bvec{u},\ebkt{\bvec{v}}{\bvec{w}}\rangle.
\end{alignat*}

The weak form of the Yang--Mills equations \eqref{eq:ym} is then: Find smooth enough functions $(\ymA,\ymE):[0,T]\to(\LaHcurl{U})\times(\LaHcurl{U}$) such that, setting $\ymB=\CURL\ymA+\frac{1}{2}\ebkt{\ymA}{\ymA}$,
\begin{subequations}\label{eq:ym.weak}
\begin{alignat}{4}
\partial_t\ymA={}&-\ymE,\label{eq:ym.weak.1}\\
\label{eq:ym.weak.2}
\int_U\langle\partial_t\ymE,\bvec{v}\rangle={}&\int_U\langle\ymB,\CURL \bvec{v}+\ebkt{\ymA}{\bvec{v}}\rangle,&\qquad\forall\bvec{v}\in\LaHcurl{U}.
\end{alignat}
\end{subequations}
The constraint \eqref{eq:ym.const.E} can be recast in weak form as
\begin{equation}\label{eq:constraint.ym}
\int_U\langle\ymE,\GRAD q+[\ymA,q]\rangle=0,\qquad\forall q\in \LaHgrad{U},
\end{equation}
where we use again integration by parts for the derivative and \eqref{eq:ad.inv} to see that for Lie algebra-valued $(q,\bvec{v},\bvec{w})$,
\begin{equation*}
\int_U\langle\dbkt{\bvec{v}}{\bvec{w}},q\rangle=\int_U\langle[\bvec{v}^\mu,\bvec{w}_\mu],q\rangle=\int_U\langle\bvec{v}^\mu,[\bvec{w}_\mu,q]\rangle=\int_U\langle\bvec{v},[\bvec{w},q]\rangle.
\end{equation*}

Let us recall that these weak constraints are preserved in time.

\begin{proposition}[Preservation of constraint for the weak Yang--Mills equations]\label{prop:ym.preservation.constraint}
If $(\ymA,\ymE)$ solve \eqref{eq:ym.weak} then for all $q\in \LaHgrad{U}$ the quantity
\begin{equation}\label{def:weak.constraint}
\int_U\langle\ymE,\GRAD q+[\ymA,q]\rangle
\end{equation}
is constant in time. In particular, if the constraint \eqref{eq:constraint.ym} is satisfied at $t=0$, then it is satisfied for all $t\in [0,T]$.
\end{proposition}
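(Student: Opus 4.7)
The plan is to differentiate in time the quantity defined by \eqref{def:weak.constraint} and show its derivative vanishes along any solution of \eqref{eq:ym.weak}. Fix a time-independent $q\in\LaHgrad{U}$ and set $\phi(t)\coloneq\int_U\langle\ymE,\GRAD q+[\ymA,q]\rangle$. Since $q$ (and hence $\GRAD q$) does not depend on time, the bilinearity of $\langle\cdot,\cdot\rangle$ and $[\cdot,\cdot]$ yields
\begin{equation*}
\phi'(t)=\int_U\langle\partial_t\ymE,\GRAD q+[\ymA,q]\rangle+\int_U\langle\ymE,[\partial_t\ymA,q]\rangle,
\end{equation*}
so the task reduces to showing that both terms on the right vanish.

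For the second term, I would substitute $\partial_t\ymA=-\ymE$ from \eqref{eq:ym.weak.1} and then invoke the adjointness identity $\int_U\langle\dbkt{\bvec{v}}{\bvec{w}},q\rangle=\int_U\langle\bvec{v},[\bvec{w},q]\rangle$ stated just before Proposition \ref{prop:ym.preservation.constraint} to rewrite it as $-\int_U\langle\dbkt{\ymE}{\ymE},q\rangle$. Because the bracket $\Hstar[\cdot,\Hstar\cdot]$ is antisymmetric (as noted in Section \ref{sec:ym.equations}), $\dbkt{\ymE}{\ymE}=0$ and this contribution disappears.

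For the first term, the natural move is to plug $\bvec{v}=\GRAD q+[\ymA,q]$ into \eqref{eq:ym.weak.2}; this is admissible since $\GRAD q$ has zero curl and $[\ymA,q]$ inherits enough regularity from the solution to lie in $\LaHcurl{U}$. The resulting identity reads
\begin{equation*}
\int_U\langle\partial_t\ymE,\GRAD q+[\ymA,q]\rangle=\int_U\langle\ymB,\CURL(\GRAD q+[\ymA,q])+\ebkt{\ymA}{\GRAD q+[\ymA,q]}\rangle.
\end{equation*}
Using $\CURL\GRAD q=0$ together with the vector-calculus translation of the Bianchi-type identity $\Gcd{A}\Gcd{A}q=[B\wedge q]$, the right-hand integrand collapses to $\langle\ymB,[\ymB,q]\rangle$; a second application of the adjointness identity rewrites this as $\langle\dbkt{\ymB}{\ymB},q\rangle$, which vanishes once more by antisymmetry. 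Combining the two steps gives $\phi'(t)=0$, so \eqref{def:weak.constraint} is constant in time.

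The main obstacle I expect is verifying the identity
\begin{equation*}
\CURL[\ymA,q]+\ebkt{\ymA}{\GRAD q}+\ebkt{\ymA}{[\ymA,q]}=[\ymB,q],
\end{equation*}
which is $\Gcd{A}^2q=[B\wedge q]$ translated through the musical isomorphism. This can be established either component-wise using the explicit formulas for $[\cdot,\cdot]$ and $\ebkt{\cdot}{\cdot}$ in terms of the volume form $\varepsilon$, the Jacobi identity of $\La$, and the usual $\varepsilon$-$\varepsilon$ contraction, or by working at the level of Lie algebra-valued forms (using $d^2=0$ and the graded Jacobi identity applied to $A,A,q$) and then converting back to vector notation.
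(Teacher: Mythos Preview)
Your proposal is correct and follows essentially the same approach as the paper: differentiate the constraint functional, kill the $\int_U\langle\ymE,[\partial_t\ymA,q]\rangle$ term via \eqref{eq:ym.weak.1} and Ad-invariance, and substitute $\bvec{v}=\GRAD q+[\ymA,q]$ into \eqref{eq:ym.weak.2} to reduce the remaining term to $\int_U\langle\ymB,[\ymB,q]\rangle=0$. The only difference is that the paper carries out the component-wise verification of your ``main obstacle'' identity explicitly (using the antisymmetry of $\varepsilon$ and the Jacobi identity), whereas you package it as the vector translation of $\Gcd{A}^2q=[B\wedge q]$ and defer the computation.
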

 
\begin{proof}
Fixing $q$ (constant in time) and differentiating with respect to time, we find
\begin{equation}\label{eq:dt.const}
\int_U\partial_t\langle\ymE,\GRAD q+[\ymA,q]\rangle=\int_U\langle\partial_t\ymE,\GRAD q+[\ymA,q]\rangle + \int_U\langle\ymE,[\partial_t\ymA,q]\rangle.
\end{equation}
The second integral on the right is zero due to \eqref{eq:ym.weak.1} and the result of the Ad-invariance \eqref{eq:ad.inv} of the inner product on the Lie algebra. Substituting $\bvec{v}=\GRAD q+[\ymA,q]$ into \eqref{eq:ym.weak.2},
\begin{equation}\label{eq:alg.sub}
\int_U\langle\partial_t\ymE,\GRAD q+[\ymA,q]\rangle=\int_U\langle\ymB,\CURL(\GRAD q+[\ymA,q])+\ebkt{\ymA}{\GRAD q+[\ymA,q]}\rangle.
\end{equation}
The curl of gradient term disappears on the right, and we expand the rest in vector components to get
\begin{alignat}{2}
\langle\ymB,{}&\CURL([\ymA,q])+\ebkt{\ymA}{\GRAD q+[\ymA,q]}\rangle\nonumber\\
={}&\langle\ymB^\alpha,\varepsilon\indices{_{\alpha\mu\nu}}\partial^\mu[\ymA^{\nu},q]+\varepsilon\indices{_{\alpha\mu\nu}}[\ymA^\mu,\partial^\nu q+[\ymA^\nu,q]]\rangle\nonumber\\
={}&\langle\ymB^\alpha,\varepsilon\indices{_{\alpha\mu\nu}}[\partial^\mu\ymA^{\nu},q]+
\cancel{\varepsilon\indices{_{\alpha\mu\nu}}[\ymA^{\nu},\partial^\mu q]
+\varepsilon\indices{_{\alpha\mu\nu}}[\ymA^\mu,\partial^\nu q]}+\varepsilon\indices{_{\alpha\mu\nu}}[\ymA^\mu,[\ymA^\nu,q]]\rangle\nonumber\\
={}&\langle\ymB^\alpha,\left[\varepsilon\indices{_{\alpha\mu\nu}}\partial_\mu\ymA^{\nu}+\frac{1}{2}\varepsilon\indices{_{\alpha\mu\nu}}[\ymA^\mu,\ymA^\nu],q\right]\rangle\nonumber\\
={}&\langle\ymB,[\ymB,q]\rangle,
\label{eq:alg.cancel}
\end{alignat}
where the cancellation is justified by the antisymmetry of $\varepsilon$, and we have used the Jacobi identity to write
\[
\varepsilon\indices{_{\alpha\mu\nu}}[\ymA^\mu,[\ymA^\nu,q]]=\frac{1}{2}\varepsilon\indices{_{\alpha\mu\nu}}[[\ymA^\mu,\ymA^\nu],q]=\left[\frac{1}{2}\varepsilon\indices{_{\alpha\mu\nu}}[\ymA^\mu,\ymA^\nu],q\right].
\]
Since $\langle\ymB,[\ymB,q]\rangle$ also vanishes due to \eqref{eq:ad.inv}, this establishes the preservation in time of \eqref{def:weak.constraint}.
\end{proof}

\subsubsection{Constrained weak formulation}

When considering the discretisation of \eqref{eq:ym.weak}, we see that $\Xcurl{h}\otimes \La$ is a natural space for the electric field $\ymE$ and potential $\ymA$; then the discrete version of $\CURL\ymA$ is expected to belong to $\Xdiv{h}\otimes\La$, which suggests that this would be the discrete space for the magnetic field $\ymB$. However, we then need to understand how to discretise the non-linear term $\ebkt{\ymA}{\ymA}$  into an element of $\Xdiv{h}\otimes\La$. In addition, it needs to be done in a way so that we can reproduce the proof of Proposition \ref{prop:ym.preservation.constraint} in some sense. If the discrete version of $\CURL\GRAD=\bvec{0}$ is ensured by the complex property of the LADDR sequence, it is not clear how to reproduce at the discrete level the algebraic cancellations of \eqref{eq:alg.cancel}. For this reason, we consider an alternate formulation which directly imposes, through the usage of Lagrange multipliers, the first term of the right-hand side of \eqref{eq:dt.const} to be zero. Then reproducing the Ad-invariance and antisymmetry of the bracket term at the discrete level is sufficient for preserving the discrete constraint. The Lagrange multiplier approach of Yang--Mills equations has already been considered in \cite{Christiansen.Winther:06} for conforming numerical approximations, with numerical tests shown for 2-dimensional standard Finite Elements on triangles.

The constrained weak formulation is: Find $(\ymA,\ymE,\lambda):[0,T]\to(\LaHcurl{U})\times(\LaHcurl{U})\times (\LaHgrad{U})$ such that
\begin{subequations}\label{eq:lm.ym}
\begin{alignat}{4}
\partial_t\ymA={}&-\ymE,\\
\label{eq:lm.ym.1}
\int_U\langle\partial_t\ymE,\bvec{v}\rangle+\int_U\langle\GRAD\lambda+[\ymA,\lambda],\bvec{v}\rangle={}&\int_U\langle\ymB,\CURL \bvec{v}+\ebkt{\ymA}{\bvec{v}}\rangle,&\qquad\forall\bvec{v}\in\LaHcurl{U},\\
\label{eq:lm.ym.2}
\int_U\langle\partial_t\ymE,\GRAD q+[\ymA,q]\rangle={}&0,&\qquad\forall q\in \LaHgrad{U}.
\end{alignat}
\end{subequations}

\begin{remark}[Alternative approaches]
In conforming schemes, based on finite dimensional subspaces of $\Hcurl{\Omega}$, the algebraic manipulations \eqref{eq:alg.cancel} hold at the discrete level. The issue arises, however, in the substitution $\bvec{v}=\GRAD q+[\ymA,q]$ in Equation \eqref{eq:ym.weak.2}, since the non-linear term generally does not belong to the space of test functions (in polynomial spaces, the degree of $[\ymA,q]$ is the sum of the degrees of $\ymA$ and $q$). This was noted in \cite{Christiansen.Winther:06}, where the proof of Proposition \ref{prop:ym.preservation.constraint} could only be reproduced at the discrete level if $q$ is taken to be in the space of constant Lie algebra-valued functions, which only implies conservation of charge over the entire domain. A solution to this conundrum is put forward in \cite{Berchenko-Kogan.Stern:21} where a low-order hybridized scheme is proposed, based on a discretisation of \eqref{eq:ym.weak} using completely discontinuous spaces, and complemented with continuous conditions enforced through the addition of Lagrange multipliers. Similar methods have been considered, e.g. in \cite{rhebergen.wells:2018:hybridizable,botti.massa:2021:hho}, to recover pointwise divergence-free (and conservative) approximations of the velocity in the Navier--Stokes equations. However, this means that only the lowest order moment of the constraint on each element can be preserved, as the hybridized method would face the same issue of invalid test functions if higher degree was considered; on the contrary, the higher-order version of the LADDR scheme (see \eqref{eq:ym.lm.scheme.k} below) preserves high-order, if less local, moments of the constraint. We also note that the imposition of the continuity conditions in the hybridized scheme requires to add at least two Lie algebra unknowns per edge (in 2D) or face (in 3D); on the contrary, the constrained formulation \eqref{eq:lm.ym} only adds one Lie algebra unknown per vertex, and therefore leads to a much leaner scheme (especially in 3D).
\end{remark}

\begin{lemma}[Relationship between weak and constrained weak formulation]
If $(\ymA,\ymE)$ solves \eqref{eq:ym.weak}, then $(\ymA,\ymE,0)$ solves \eqref{eq:lm.ym}. Conversely, if $(\ymA,\ymE,\lambda)$ solves \eqref{eq:lm.ym} then $(\ymA,\ymE)$ solves \eqref{eq:ym.weak}.
\end{lemma}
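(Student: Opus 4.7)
The two formulations differ only by the Lagrange-multiplier contribution in \eqref{eq:lm.ym.1} and the extra equation \eqref{eq:lm.ym.2}. The crux of both implications is the purely algebraic identity extracted from the proof of Proposition \ref{prop:ym.preservation.constraint}: for any $\ymA\in\LaHcurl{U}$ and $q\in\LaHgrad{U}$, setting $\ymB=\CURL\ymA+\frac12\ebkt{\ymA}{\ymA}$,
\begin{equation*}
\int_U\langle\ymB,\CURL(\GRAD q+[\ymA,q])+\ebkt{\ymA}{\GRAD q+[\ymA,q]}\rangle=\int_U\langle\ymB,[\ymB,q]\rangle=0,
\end{equation*}
which does not use the evolution equations and relies only on the Jacobi identity, Ad-invariance, and the symmetries of $\varepsilon$.

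For the forward implication, I take a solution $(\ymA,\ymE)$ of \eqref{eq:ym.weak} and set $\lambda=0$. Equation \eqref{eq:lm.ym.1} then reduces verbatim to \eqref{eq:ym.weak.2}, so only \eqref{eq:lm.ym.2} must be verified. Substituting the admissible test vector $\bvec{v}=\GRAD q+[\ymA,q]$ into \eqref{eq:ym.weak.2} and invoking the identity above turns the right-hand side into $0$, yielding exactly \eqref{eq:lm.ym.2}.

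For the converse, starting from a solution $(\ymA,\ymE,\lambda)$ of \eqref{eq:lm.ym}, I perform the same substitution $\bvec{v}=\GRAD q+[\ymA,q]$ in \eqref{eq:lm.ym.1}. The right-hand side vanishes by the algebraic identity, while the $\partial_t\ymE$ term on the left vanishes thanks to \eqref{eq:lm.ym.2}. What remains is
\begin{equation*}
\int_U\langle\GRAD\lambda+[\ymA,\lambda],\GRAD q+[\ymA,q]\rangle=0\qquad\forall q\in\LaHgrad{U}.
\end{equation*}
Choosing $q=\lambda$ forces the $L^2$-norm of $\GRAD\lambda+[\ymA,\lambda]$ to vanish, so this combination is zero almost everywhere; plugging this back into \eqref{eq:lm.ym.1} gives precisely \eqref{eq:ym.weak.2}.

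The only technical point to monitor is that $\GRAD q+[\ymA,q]$ must lie in $\LaHcurl{U}$ for the substitution to be legitimate; this is a matter of interpreting the ``smooth enough'' hypothesis on $\ymA$ and $q$ at the continuous level, and does not present a genuine obstacle to the argument.
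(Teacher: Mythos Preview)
Your proof is correct and follows essentially the same approach as the paper's: both use the algebraic identity from Proposition~\ref{prop:ym.preservation.constraint} to dispatch the forward implication and, for the converse, test \eqref{eq:lm.ym.1} with $\bvec{v}=\GRAD\lambda+[\ymA,\lambda]$ (you reach this via arbitrary $q$ then specialize to $q=\lambda$, while the paper takes $q=\lambda$ directly) and use \eqref{eq:lm.ym.2} together with the identity to obtain $\GRAD\lambda+[\ymA,\lambda]=0$.
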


\begin{proof}
Let $(\ymA,\ymE)$ be a solution to \eqref{eq:ym.weak} and set $\lambda=0$. Then the second term in the left-hand side of \eqref{eq:lm.ym.1} vanishes and this equation is simply \eqref{eq:ym.weak.2}, while the extra equation \eqref{eq:lm.ym.2} follows from \eqref{eq:alg.sub} and \eqref{eq:alg.cancel}. Conversely, if $(\ymA,\ymE,\lambda)$ solves \eqref{eq:lm.ym} then, choosing $\bvec{v}=\GRAD\lambda+[\ymA,\lambda]$ in \eqref{eq:lm.ym.1} and using \eqref{eq:lm.ym.2} with $q=\lambda$, as well as the calculation \eqref{eq:alg.cancel} for the right side, we have $\GRAD\lambda+[\ymA,\lambda]=0$. Hence, \eqref{eq:lm.ym.1} reduces to \eqref{eq:ym.weak.2}.
\end{proof}

\begin{remark}[About the Lagrange multiplier]\label{rem:lagrange}
There is no certainty that \eqref{eq:lm.ym} uniquely determines $\lambda$. \cite{Christiansen.Winther:06} claims that, since this system is formally equivalent to \eqref{eq:ym.weak}, it should lead to a zero Lagrange multiplier. However, what the argument above actually shows is that the quantity $\GRAD\lambda+[\ymA,\lambda]$ vanishes; it is unclear how that would necessarily mean that $\lambda$ itself vanishes; in the case $\ymA=0$ for example, an additional condition on $\lambda$ -- e.g. fixing its average -- must be imposed to obtain its uniqueness. Proposition 3.4 in \cite{Christiansen.Winther:06} discusses, in the case of a conforming approximation $X_h^0$ of $H^1$, the question of uniform stability (for small enough $h$) of the Lagrange multiplier when seeking that multiplier in $X_h^0\cap S$, where $S$ is a subspace of $H^1$ that has a zero intersection with the kernel of $\GRAD\cdot+[\ymA,\cdot]$. Writing the discretisation \eqref{eq:ym.lm.scheme} of the constrained problem \eqref{eq:lm.ym} using, in the conforming setting, $X_h^0\cap S$ instead of $X_h^0$ would restrain the choice of test functions in \eqref{eq:ym.lm.scheme.3} to $X_h^0\cap S$ which would, in turn, only allow to prove the preservation of constraint for such test functions (see the proofs of \cite[Proposition 3]{Christiansen.Winther:06} and Proposition \ref{prop:constraint.preservation} below); note also that, finding $S$ is not an easy task, all the more as, to maintain as much as possible the constraint, one would have to take $S$ such that $X_h^0\cap S$ remains as large as possible.

Our numerical experiments actually seem to indicate that, in the discrete setting at least (and without imposing any additional constraint on the Lagrange multiplier), there is a solution to \eqref{eq:lm.ym} but it may not be unique -- see the discussion in Section \ref{sec:tests}. We also checked that imposing, for example, the average of $\lambda$ leads to a numerical solution that is unique, but breaks the full constraint preservation ($\mathfrak C^n$ defined in \eqref{discrete.const} does not remain constant for all $\underline{q}_h$). We however believe that the non-uniqueness issue only affects the Lagrange multiplier (which does not have any genuine physical meaning), and that the quantities of importance $\ymE,\ymA$ remain uniquely determined by the constrained equations (in particular because of the energy estimate \eqref{eq:lm.ym.energy} below).
\end{remark}

\subsubsection{Discretisation of the Lie brackets}

Two kinds of Lie brackets are involved in \eqref{eq:lm.ym}: one acting on $(\LaHcurl{U})\times(\LaHcurl{U})$, the other on $(\LaHcurl{U})\times(\LaHgrad{U})$. The requirements (for stability and constraint preservation) on the discretisations of these brackets are different, so we will employ different approaches for each of them. The brackets on $(\LaHcurl{U})\times (\LaHcurl{U})$ appear in the inner products with differential operators, and as well as in the definition of $\ymB$. Therefore it is convenient to explicitly construct a discrete version of this bracket as an element of $\LaXdiv{h}$, so that we can make use of the properties of our discrete differential operators and inner products in calculations. The brackets on $(\LaHcurl{U})\times(\LaHgrad{U})$ on the other hand are not mixed with derivatives. The discrete version needs to instead reproduce the Ad-invariance \eqref{eq:ad.inv} with the (discrete) inner product, which is much more straightforward.

For given Lie algebra-valued vector fields $\bvec{v},\bvec{w}$, the discretisation of $\ebkt{\bvec{v}}{\bvec{w}}$ in $\LaXdiv{h}$ relies on the fact that, for any $F\in\Fh$, $\ebkt{\bvec{v}}{\bvec{w}}\cdot\normal_F$ depends only on the tangential components $\bvec{v}_{{\rm t},F},\bvec{w}_{{\rm t},F}$ of $\bvec{v},\bvec{w}$ to the face $F$. This is easily seen using the decomposition $\bvec{\xi} = (\bvec{\xi}\cdot\normal_F)\normal_F + \bvec{\xi}_{{\rm t},F}$ of a vector into its normal and tangential parts relative to $F$. Applying this decomposition to $\bvec{v}$ and $\bvec{w}$, $\ebkt{\bvec{v}}{\bvec{w}}\cdot\normal_F$ reduces to $\ebkt{\bvec{v}_{{\rm t},F}}{\bvec{w}_{{\rm t},F}}\cdot\normal_F$ due to the calculation
\[
\varepsilon\indices{^\alpha_{\mu\nu}}[\bvec{v}^\mu,\bvec{w}^\nu](\normal_F)_\alpha=\varepsilon\indices{^\alpha_{\mu\nu}}[(\bvec{v}\cdot\normal_F)\normal_F^\mu + \bvec{v}_{{\rm t}, F}^\mu,(\bvec{w}\cdot\normal_F)\normal_F^\nu + \bvec{w}_{{\rm t}, F}^\nu](\normal_F)_\alpha=\varepsilon\indices{^\alpha_{\mu\nu}}[\bvec{v}_{{\rm t}, F}^\mu,\bvec{w}_{{\rm t}, F}^\nu](\normal_F)_\alpha,
\]
where we have used $\varepsilon\indices{^\alpha_{\mu\nu}}\normal_F^\mu(\normal_F)_\alpha=\varepsilon\indices{^\alpha_{\mu\nu}}\normal_F^\nu(\normal_F)_\alpha=0$ by antisymmetry of $\varepsilon$, leaving us with just the final term after expanding the bracket. 

The tangential trace $\LatrFt$ (see \eqref{eq:tang.tr}) provides, from elements of $\LaXcurl{h}$, discrete versions of $\bvec{v}_{{\rm t}, F}$, $\bvec{w}_{{\rm t}, F}$. This discrete tangential trace is moreover consistent when applied to the interpolate of constant fields. Since the values of $\LatrFt$ depend only on the values of the elements of $\LaXcurl{h}$ on the edges of $F$, we are able to create a globally well defined element of $\LaXdiv{h}$. This leads to the following definition of the symmetric bilinear form $\ebkttr{\cdot}{\cdot}:(\LaXcurl{h})\times(\LaXcurl{h})\to\LaXdiv{h}$: For any $\uvec{v}_h$, $\uvec{w}_h\in\LaXcurl{h}$,
\begin{equation}\label{eq:def.brack.xdiv}
(\ebkttr{\uvec{v}_h}{\uvec{w}_h})_F=\ebkt{\LatrFt\uvec{v}_F}{\LatrFt\uvec{w}_F}\cdot\normal_F,\qquad\forall F\in\Fh.
\end{equation}

The discretisation of integral terms, in \eqref{eq:lm.ym}, involving the bracket in $(\LaHcurl{U})\times(\LaHgrad{U})$ is a bit more straightforward. We simply use the global potential reconstructions:
\[
\int_U \langle\bvec{v},[\bvec{w},q]\rangle\leadsto \int_U\langle\LaPcurlh\uvec{v}_h,[\LaPcurlh\uvec{w}_h,\LaPgradh\underline{q}_h]\rangle.
\]
We note that, in this discretisation of the coupling between potential and Lagrange multiplier, and contrary to the discretisations \eqref{eq:def.brack.xdiv} and \eqref{eq:ym.lm.scheme.2} of the coupling between magnetic field and potential, the Lie algebra inner product and bracket themselves are not discretised (only the functions are, through the usage of discrete vectors $\uvec{v}_h$, $\uvec{w}_h$, $\underline{q}_h$ and the potential reconstructions); this choice is made to ensure that, by \eqref{eq:ad.inv}, the Ad-invariance holds for this discretisation; in particular, if $\uvec{v}_h=\uvec{w}_h$, this discretisation vanishes.

\subsubsection{Unconstrained scheme}

The $\theta$-scheme for \eqref{eq:ym.weak} is: Find families $(\underline{\ymA}_h^n)_n$, $(\underline{\ymE}_h^n)_n$ such that for all $n$, $(\underline{\ymA}_h^n,\underline{\ymE}_h^n) \in(\LaXcurl{h})\times(\LaXcurl{h})$ and
\begin{subequations}\label{eq:unconst}
\begin{alignat}{4}\label{eq:unconst.1}
\delta_t^{n+1}\underline{\ymA}_h&=-\underline{\ymE}_h^{n+\theta},\\
\label{eq:unconst.2}
(\delta_t^{n+1}\underline{\ymE}_h,\uvec{v}_h)_{\CURL,\La,h}&=(\underline{\ymB}_h^{n+\theta},\LauCh\uvec{v}_h+\ebkttr{\underline{\ymA}_h^{n+\frac{1}{2}}}{\uvec{v}_h})_{\DIV,\La,h},&\qquad\forall\uvec{v}_h\in\LaXcurl{h},
\end{alignat}
\end{subequations}
where 
\begin{equation}\label{eq:def.ymBh}
\underline{\ymB}_h^n=\LauCh\underline{\ymA}_h^n+\frac12\ebkttr{\underline{\ymA}_h^n}{\underline{\ymA}_h^n}.
\end{equation}
This scheme does not guarantee the preservation of the constraint in any obvious discrete form, but we have decay of energy for any $\theta$, and preservation for $\theta=\frac12$ (Crank--Nicolson time-stepping).
\begin{proposition}[Energy conservation]\label{prop:energy.conservation}
The scheme \eqref{eq:unconst} has energy decay in the sense that, for all $n$,
\begin{equation*}
\frac{1}{2}\norm[\CURL,\La,h]{\underline{\ymE}_h^n}^2+\frac{1}{2}\norm[\DIV,\La,h]{\underline{\ymB}_h^n}^2\leq\frac{1}{2}\norm[\CURL,\La,h]{\underline{\ymE}_h^{n+1}}^2+\frac{1}{2}\norm[\DIV,\La,h]{\underline{\ymB}_h^{n+1}}^2,
\end{equation*}
with energy conservation (equality) when $\theta=\frac{1}{2}$.
\end{proposition}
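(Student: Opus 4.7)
My plan is to test \eqref{eq:unconst.2} with $\uvec{v}_h = \underline{\ymE}_h^{n+\theta}$ and, using \eqref{eq:unconst.1}, replace $\underline{\ymE}_h^{n+\theta} = -\delta_t^{n+1}\underline{\ymA}_h$ inside the bilinear bracket on the right-hand side. This reduces the claim to analysing the two inner products $(\delta_t^{n+1}\underline{\ymE}_h,\underline{\ymE}_h^{n+\theta})_{\CURL,\La,h}$ and $(\underline{\ymB}_h^{n+\theta},\LauCh\delta_t^{n+1}\underline{\ymA}_h + \ebkttr{\underline{\ymA}_h^{n+\frac12}}{\delta_t^{n+1}\underline{\ymA}_h})_{\DIV,\La,h}$.

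The main obstacle, and what justifies the choice of the arithmetic midpoint $\underline{\ymA}_h^{n+\frac12}$ in the non-linear coupling of \eqref{eq:unconst.2} (rather than the $\theta$-weighted combination $\underline{\ymA}_h^{n+\theta}$), is a discrete chain rule for $\underline{\ymB}_h$:
$$
\delta_t^{n+1}\underline{\ymB}_h = \LauCh\delta_t^{n+1}\underline{\ymA}_h + \ebkttr{\underline{\ymA}_h^{n+\frac12}}{\delta_t^{n+1}\underline{\ymA}_h}.
$$
I would establish this as a preliminary step, starting from the definition \eqref{eq:def.ymBh} and applying the polarisation-type identity $\ebkttr{X}{X} - \ebkttr{Y}{Y} = 2\,\ebkttr{\tfrac{X+Y}{2}}{X-Y}$, which holds because $\ebkttr{\cdot}{\cdot}$ is symmetric bilinear. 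This is the only place where the specific quadratic structure of $\underline{\ymB}_h^n$ and the midpoint choice intervene.

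With this identity, the tested evolution equation collapses to
$$
(\delta_t^{n+1}\underline{\ymE}_h,\underline{\ymE}_h^{n+\theta})_{\CURL,\La,h} + (\underline{\ymB}_h^{n+\theta},\delta_t^{n+1}\underline{\ymB}_h)_{\DIV,\La,h} = 0.
$$
To each inner product I apply the standard $\theta$-scheme algebraic identity, valid in any real inner-product space:
$$
(\delta_t^{n+1}\bvec{Z},\bvec{Z}^{n+\theta}) = \frac{1}{2\deltat^{n+\frac12}}\bigl(\|\bvec{Z}^{n+1}\|^2 - \|\bvec{Z}^n\|^2\bigr) + \frac{2\theta-1}{2}\,\deltat^{n+\frac12}\,\|\delta_t^{n+1}\bvec{Z}\|^2,
$$
which follows from expanding the convex combination $\theta\bvec{Z}^{n+1} + (1-\theta)\bvec{Z}^n$ and using $2(X,Y) = \|X\|^2 + \|Y\|^2 - \|X-Y\|^2$. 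Multiplying the resulting equation by $\deltat^{n+\frac12}$ and rearranging then yields
$$
\tfrac12\|\underline{\ymE}_h^{n+1}\|_{\CURL,\La,h}^2 + \tfrac12\|\underline{\ymB}_h^{n+1}\|_{\DIV,\La,h}^2 - \tfrac12\|\underline{\ymE}_h^n\|_{\CURL,\La,h}^2 - \tfrac12\|\underline{\ymB}_h^n\|_{\DIV,\La,h}^2 = -\tfrac{(2\theta-1)(\deltat^{n+\frac12})^2}{2}\bigl(\|\delta_t^{n+1}\underline{\ymE}_h\|_{\CURL,\La,h}^2 + \|\delta_t^{n+1}\underline{\ymB}_h\|_{\DIV,\La,h}^2\bigr),
$$
which has the appropriate sign for $\theta \in [\tfrac12,1]$ and vanishes when $\theta = \tfrac12$, giving both the stated monotonicity in time and the energy conservation in the Crank--Nicolson case.
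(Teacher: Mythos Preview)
Your proof is correct and follows essentially the same approach as the paper: test \eqref{eq:unconst.2} with $\underline{\ymE}_h^{n+\theta}$, use \eqref{eq:unconst.1} together with the symmetry of $\ebkttr{\cdot}{\cdot}$ to recognise $\underline{\ymB}_h^{n+1}-\underline{\ymB}_h^{n}$ on the right-hand side, and apply the standard $\theta$-scheme identity to both inner products. The only cosmetic difference is that you isolate the ``discrete chain rule'' $\delta_t^{n+1}\underline{\ymB}_h = \LauCh\delta_t^{n+1}\underline{\ymA}_h + \ebkttr{\underline{\ymA}_h^{n+\frac12}}{\delta_t^{n+1}\underline{\ymA}_h}$ as a separate preliminary step, whereas the paper performs the same polarisation computation inline after multiplying through by $\deltat^{n+\frac12}$.
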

\begin{proof}
Choosing $\bvec{v}=\underline{\ymE}_h^{n+\theta}$ in \eqref{eq:unconst.2}, and multiplying both sides by $\deltat^{n+\frac{1}{2}}$, we have on the left:
\begin{alignat}{4}\nonumber
(\underline{\ymE}_h^{n+1}-\underline{\ymE}_h^n,\underline{\ymE}_h^{n+\theta})_{\CURL,\La,h}{}&=(\underline{\ymE}_h^{n+1}-\underline{\ymE}_h^n,\frac{1}{2}\underline{\ymE}_h^{n+1}+\frac{1}{2}\underline{\ymE}_h^n+(\theta-\frac{1}{2})\underline{\ymE}_h^{n+1}-(\theta-\frac{1}{2})\underline{\ymE}_h^n)_{\CURL,\La,h}\\ \label{eq:energy.norm.calc}
&=\frac{1}{2}\norm[\CURL,\La,h]{\underline{\ymE}_h^{n+1}}^2-\frac{1}{2}\norm[\CURL,\La,h]{\underline{\ymE}_h^n}^2+(\theta-\frac{1}{2})\norm[\CURL,\La,h]{\underline{\ymE}_h^{n+1}-\underline{\ymE}_h^n}^2.
\end{alignat}
On the right, using $\deltat^{n+\frac{1}{2}}\underline{\ymE}_h^{n+\theta}=\underline{\ymA}_h^n-\underline{\ymA}_h^{n+1}$ by \eqref{eq:unconst.1}, and the symmetry of $\ebkttr{\cdot}{\cdot}$ (coming from its definition \eqref{eq:def.brack.xdiv} together with the anti-symmetries of $\varepsilon$ and of the Lie bracket),
\begin{alignat*}{4}
\deltat^{n+\frac{1}{2}}{}&(\underline{\ymB}_h^{n+\theta},\LauCh\underline{\ymE}_h^{n+\theta}+\ebkttr{\underline{\ymA}_h^{n+\frac{1}{2}}}{\underline{\ymE}_h^{n+\theta}})_{\DIV,\La,h}\\
&=(\underline{\ymB}_h^{n+\theta},\LauCh(\underline{\ymA}_h^n-\underline{\ymA}_h^{n+1})+\frac{1}{2}\ebkttr{\underline{\ymA}_h^n+\underline{\ymA}_h^{n+1}}{\underline{\ymA}_h^n-\underline{\ymA}_h^{n+1}})_{\DIV,\La,h}\\
&=(\underline{\ymB}_h^{n+\theta},\LauCh\underline{\ymA}_h^n+\frac{1}{2}\ebkttr{\underline{\ymA}_h^n}{\underline{\ymA}_h^n}-\LauCh\underline{\ymA}_h^{n+1}-\frac{1}{2}\ebkttr{\underline{\ymA}_h^{n+1}}{\underline{\ymA}_h^{n+1}})_{\DIV,\La,h}\\
&=(\underline{\ymB}_h^{n+\theta},\underline{\ymB}_h^n-\underline{\ymB}_h^{n+1})_{\DIV,\La,h}.
\end{alignat*}
Using the same calculation as in \eqref{eq:energy.norm.calc}, we obtain 
\begin{equation*}
(\underline{\ymB}_h^{n+\theta},\underline{\ymB}_h^n-\underline{\ymB}_h^{n+1})_{\DIV,\La,h}=\frac{1}{2}\norm[\DIV,\La,h]{\underline{\ymB}_h^n}^2-\frac{1}{2}\norm[\DIV,\La,h]{\underline{\ymB}_h^{n+1}}^2-(\theta-\frac{1}{2})\norm[\DIV,\La,h]{\underline{\ymB}_h^{n+1}-\underline{\ymB}_h^n}^2,
\end{equation*}
Regrouping the terms, we get
\begin{alignat*}{4}
\frac{1}{2}\norm[\CURL,\La,h]{\underline{\ymE}_h^{n+1}}^2+\frac{1}{2}\norm[\DIV,\La,h]{\underline{\ymB}_h^{n+1}}^2={}&\frac{1}{2}\norm[\CURL,\La,h]{\underline{\ymE}_h^n}^2+\frac{1}{2}\norm[\DIV,\La,h]{\underline{\ymB}_h^n}^2\\
&-(\theta-\frac{1}{2})(\norm[\DIV,\La,h]{\underline{\ymB}_h^{n+1}-\underline{\ymB}_h^n}^2+\norm[\CURL,\La,h]{\underline{\ymE}_h^{n+1}-\underline{\ymE}_h^n}^2),
\end{alignat*}
which concludes the proof since $\theta\geq\frac{1}{2}$.
\end{proof}

\subsubsection{Constrained scheme}

The $\theta$-scheme for the constrained formulation \eqref{eq:lm.ym} is: Find families $(\underline{\ymA}_h^n)_n$, $(\underline{\ymE}_h^n)_n$, $(\underline{\lambda}_h^n)_n$ such that for all $n$, $(\underline{\ymA}_h^n,\underline{\ymE}_h^n,\underline{\lambda}_h^n)\in(\LaXcurl{h})\times(\LaXcurl{h})\times(\LaXgrad{h})$ and
\begin{subequations}\label{eq:ym.lm.scheme}
\begin{alignat}{4}
\delta_t^{n+1}\underline{\ymA}_h&=-\underline{\ymE}_h^{n+\theta},\label{eq:ym.lm.scheme.1}\\ 
(\delta_t^{n+1}\underline{\ymE}_h,\uvec{v}_h)_{\CURL,\La,h}+{}&(\LauGh{\underline{\lambda}_h^{n+1}},\uvec{v}_h)_{\CURL,\La,h}+\int_U\langle[\LaPcurlh\underline{\ymA}_h^{n+\theta},\LaPgradh\underline{\lambda}_h^{n+1}],\LaPcurlh\uvec{v}_h\rangle\nonumber\\
&=(\underline{\ymB}_h^{n+\theta},\LauCh\uvec{v}_h+\ebkttr{\underline{\ymA}_h^{n+\frac{1}{2}}}{\uvec{v}_h})_{\DIV,\La,h},\qquad\forall\uvec{v}_h\in\LaXcurl{h},\label{eq:ym.lm.scheme.2}\\
(\delta_t^{n+1}\underline{\ymE}_h,\LauGh\underline{q}_h)_{\CURL,\La,h}{}&+\int_U\langle\LaPcurlh(\delta_t^{n+1}\underline{\ymE}_h),[\LaPcurlh\underline{\ymA}_h^{n+1-\theta},\LaPgradh\underline{q}_h]\rangle\nonumber\\
&=0,\qquad\forall\underline{q}_h\in\LaXgrad{h},\label{eq:ym.lm.scheme.3}
\end{alignat}
\end{subequations}
where $\underline{\ymB}_h^n$ is still given by \eqref{eq:def.ymBh}.

We define the discrete constraint functional $\mathfrak C^n:\LaXgrad{h}\to\Real$ by
\begin{equation}\label{discrete.const}
\mathfrak C^n(\underline{q}_h)\coloneq(\underline{\ymE}_h^n,\LauGh\underline{q}_h)_{\CURL,\La,h}+\int_U\langle\LaPcurlh\underline{\ymE}_h^n,[\LaPcurlh\underline{\ymA}_h^n,\LaPgradh\underline{q}_h]\rangle\qquad\forall\underline{q}_h\in\LaXgrad{h}.
\end{equation}

\begin{proposition}[Constraint preservation]\label{prop:constraint.preservation}
If $(\underline{\ymA}_h^n,\underline{\ymE}_h^n,\underline{\lambda}_h^n)$ solve \eqref{eq:ym.lm.scheme} then, for all $\underline{q}_h\in\LaXgrad{h}$, the quantity $\mathfrak C^n(\underline{q}_h)$ is independent of $n$.
\end{proposition}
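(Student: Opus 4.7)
The plan is to mimic the continuous proof of Proposition \ref{prop:ym.preservation.constraint} at the discrete level, carefully tracking the $\theta$-weighted indices that appear in the scheme. Specifically, I will show that $\mathfrak{C}^{n+1}(\underline{q}_h)-\mathfrak{C}^n(\underline{q}_h)=0$ for every $\underline{q}_h\in\LaXgrad{h}$, by dividing the increment by $\deltat^{n+\frac{1}{2}}$ and showing the resulting expression vanishes.

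First, I would expand the difference
\[
\mathfrak{C}^{n+1}(\underline{q}_h)-\mathfrak{C}^n(\underline{q}_h)=(\underline{\ymE}_h^{n+1}-\underline{\ymE}_h^n,\LauGh\underline{q}_h)_{\CURL,\La,h}+\int_U\langle\LaPcurlh\underline{\ymE}_h^{n+1},[\LaPcurlh\underline{\ymA}_h^{n+1},\LaPgradh\underline{q}_h]\rangle-\int_U\langle\LaPcurlh\underline{\ymE}_h^n,[\LaPcurlh\underline{\ymA}_h^n,\LaPgradh\underline{q}_h]\rangle.
\]
The key algebraic identity is that for any scalars (or, here, bilinear pairings) and any $\alpha,\beta\in\Real$ with $\alpha+\beta=1$, one has $a_{n+1}b_{n+1}-a_nb_n=(a_{n+1}-a_n)[(1-\alpha)b_n+\alpha b_{n+1}]+[(1-\beta)a_n+\beta a_{n+1}](b_{n+1}-b_n)$. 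Applied with $\alpha=1-\theta$ and $\beta=\theta$, and using linearity of $\LaPcurlh$ and of the Lie bracket in each argument, the nonlinear term splits cleanly as
\[
\deltat^{n+\frac{1}{2}}\!\!\int_U\!\langle\LaPcurlh(\delta_t^{n+1}\underline{\ymE}_h),[\LaPcurlh\underline{\ymA}_h^{n+1-\theta},\LaPgradh\underline{q}_h]\rangle+\deltat^{n+\frac{1}{2}}\!\!\int_U\!\langle\LaPcurlh\underline{\ymE}_h^{n+\theta},[\LaPcurlh(\delta_t^{n+1}\underline{\ymA}_h),\LaPgradh\underline{q}_h]\rangle.
\]
This is precisely where the non-standard time-indexing $\underline{\ymA}_h^{n+1-\theta}$ in \eqref{eq:ym.lm.scheme.3} pays off: the first term, added to $(\delta_t^{n+1}\underline{\ymE}_h,\LauGh\underline{q}_h)_{\CURL,\La,h}$, is exactly the left-hand side of \eqref{eq:ym.lm.scheme.3}, hence vanishes.

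Finally, for the second term, I would invoke the evolution equation \eqref{eq:ym.lm.scheme.1} to rewrite $\LaPcurlh(\delta_t^{n+1}\underline{\ymA}_h)=-\LaPcurlh\underline{\ymE}_h^{n+\theta}$, so the integrand becomes $-\langle\LaPcurlh\underline{\ymE}_h^{n+\theta},[\LaPcurlh\underline{\ymE}_h^{n+\theta},\LaPgradh\underline{q}_h]\rangle$, which vanishes pointwise by Ad-invariance \eqref{eq:ad.inv} and antisymmetry of the Lie bracket (i.e.\ $\langle\bvec{v},[\bvec{v},q]\rangle=\langle[\bvec{v},\bvec{v}],q\rangle=0$, exactly as in the last step of Proposition \ref{prop:ym.preservation.constraint}). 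Putting the pieces together, $\mathfrak{C}^{n+1}(\underline{q}_h)=\mathfrak{C}^n(\underline{q}_h)$.

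The main obstacle, and the only delicate point, is to correctly derive the bilinear splitting identity and match the resulting convex combinations to the specific indices chosen in the scheme. The design choices of $\underline{\ymA}_h^{n+\theta}$ in \eqref{eq:ym.lm.scheme.2} and $\underline{\ymA}_h^{n+1-\theta}$ in \eqref{eq:ym.lm.scheme.3} are exactly what makes $\alpha=1-\theta$, $\beta=\theta$ work; any other choice would leave a residual cross-term that cannot be absorbed either by \eqref{eq:ym.lm.scheme.3} or by Ad-invariance. All remaining manipulations (linearity of potentials, symmetry of $\ebkttr{\cdot}{\cdot}$, etc.) are routine once this matching is identified.
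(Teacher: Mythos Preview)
Your proof is correct and follows essentially the same route as the paper: the paper writes $\delta_t^{n+1}\mathfrak C(\underline q_h)$ and uses a $(1-\theta)/\theta$ weighted combination of the two discrete product rules $\delta_t^{n+1}(u,v)=(\delta_t^{n+1}u,v^{n+1})+(u^n,\delta_t^{n+1}v)=(\delta_t^{n+1}u,v^n)+(u^{n+1},\delta_t^{n+1}v)$, which is exactly your bilinear splitting identity with $\alpha=1-\theta$, $\beta=\theta$, and then concludes via \eqref{eq:ym.lm.scheme.3}, \eqref{eq:ym.lm.scheme.1} and Ad-invariance just as you do. (Two incidental remarks: the symmetry of $\ebkttr{\cdot}{\cdot}$ and the index $\underline{\ymA}_h^{n+\theta}$ in \eqref{eq:ym.lm.scheme.2} play no role here, since \eqref{eq:ym.lm.scheme.2} is never invoked in this argument.)
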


\begin{proof}
Using the same idea as for the continuous case (see the proof of Proposition \ref{prop:ym.preservation.constraint}), we fix $\underline{q}_h$ (independent of $n$), and take the discrete derivative of \eqref{discrete.const}. We can expand any bilinear product using the formulas $\delta_t^{n+1}(u,v)=(\delta_t^{n+1}u,v^{n+1})+(u^n,\delta_t^{n+1}v)=(\delta_t^{n+1}u,v^n)+(u^{n+1},\delta_t^{n+1}v)$ to get
\begin{alignat*}{4}
\delta_t^{n+1}\mathfrak C^n(\underline{q}_h)={}&\delta_t^{n+1}(\underline{\ymE}_h,\LauGh\underline{q}_h)_{\CURL,\La,h}+(1-\theta)\delta_t^{n+1}\int_U\langle\LaPcurlh\underline{\ymE}_h,[\LaPcurlh\underline{\ymA}_h,\LaPgradh\underline{q}_h]\rangle\\
&+\theta\delta_t^{n+1}\int_U\langle\LaPcurlh\underline{\ymE}_h,[\LaPcurlh\underline{\ymA}_h,\LaPgradh\underline{q}_h]\rangle\\
={}&(\delta_t^{n+1}\underline{\ymE}_h,\LauGh\underline{q}_h)_{\CURL,\La,h}+(1-\theta)\int_U\langle\LaPcurlh(\delta_t^{n+1}\underline{\ymE}_h),[\LaPcurlh\underline{\ymA}_h^{n+1},\LaPgradh\underline{q}_h]\rangle\\
&+(1-\theta)\int_U\langle\LaPcurlh\underline{\ymE}_h^n,[\LaPcurlh(\delta_t^{n+1}\underline{\ymA}_h),\LaPgradh\underline{q}_h]\rangle\\
&+\theta\int_U\langle\LaPcurlh(\delta_t^{n+1}\underline{\ymE}_h),[\LaPcurlh\underline{\ymA}_h^n,\LaPgradh\underline{q}_h]\rangle\\
&+\theta\int_U\langle\LaPcurlh(\underline{\ymE}_h^{n+1}),[\LaPcurlh(\delta_t^{n+1}\underline{\ymA}_h),\LaPgradh\underline{q}_h]\rangle\\
={}&(\delta_t^{n+1}\underline{\ymE}_h,\LauGh\underline{q}_h)_{\CURL,\La,h}+ \int_U\langle\LaPcurlh(\delta_t^{n+1}\underline{\ymE}_h),[\LaPcurlh\underline{\ymA}_h^{n+1-\theta},\LaPgradh\underline{q}_h]\rangle\\
&+\int_U\langle\LaPcurlh(\underline{\ymE}_h^{n+\theta}),[\LaPcurlh(\delta_t^{n+1}\underline{\ymA}_h),\LaPgradh\underline{q}_h]\rangle=0,
\end{alignat*}
where after collecting the like terms in the second equality, the use of \eqref{eq:ym.lm.scheme.3} takes care of the first line of the third equality, and \eqref{eq:ym.lm.scheme.1} along with the Ad-invariance deals with the second line. This conclu\-des the proof that \eqref{discrete.const} is independent of $n$.
\end{proof}

Even if the initial conditions $(\ymA_0,\ymE_0)$ of the Yang--Mills equations satisfy the constraint \eqref{eq:ym.const.E}, it is not expected that the discrete initial conditions \eqref{eq:ic.interpolated} obtained by interpolation are such that $\mathfrak C^0(\underline{q}_h)=0$ for all $\underline{q}_h\in\LaXgrad{h}$. By consistency of the discrete gradient $\LauGh$ and of the potential reconstructions $\LaPcurlh,\LaPgradh$, these quantities should however be small when $h$ is small and the discrete field $\underline{q}_h$ is the interpolate of a smooth field $q$ (which are the discrete fields of interest to analyse the approximation properties of the scheme). Proposition \ref{prop:constraint.preservation} then ensures that $\mathfrak C^n(\underline{q}_h)$ remain uniformly small as time evolves. 

In some circumstances however, such as to establish the energy dissipation result (see Proposition \ref{prop:energy.lm.ym} below), it can be important to ensure that $\mathfrak C^n\equiv 0$ for all $n$; by Proposition \ref{prop:constraint.preservation}, it suffices to choose initial conditions such that this relation holds for $n=0$. This can be done, for example, by solving the following linear problem, which consist in adding the constraint to \eqref{eq:ic.interpolated} and the corresponding Lagrange multiplier: set $\underline{\ymA}_h^0=\LaIcurl{h}\ymA_0$ and find $(\underline{\ymE}_h^0,\underline{\lambda}_h^0)\in(\LaXcurl{h})\times(\LaXgrad{h})$ such that
\begin{subequations}\label{eq:ic.constrained}
  \begin{alignat}{2}
    (\underline{\ymE}_h^0,\uvec{v}_h)_{\CURL,\La,h}+(\LauGh{\underline{\lambda}_h^0},\uvec{v}_h)_{\CURL,\La,h}+{}&\int_U\langle[\LaPcurlh\underline{\ymA}_h^0,\LaPgradh\underline{\lambda}_h^0],\LaPcurlh\uvec{v}_h\rangle\nonumber\\
&=(\LaIcurl{h}\ymE_0,\uvec{v}_h)_{\CURL,\La,h},\qquad\forall\uvec{v}_h\in\LaXcurl{h},\label{eq:ic.constrained.1}\\
(\underline{\ymE}_h^0,\LauGh\underline{q}_h)_{\CURL,\La,h}+\int_U\langle\LaPcurlh\underline{\ymE}_h^0,{}&[\LaPcurlh\underline{\ymA}_h^0,\LaPgradh\underline{q}_h]\rangle=0,\qquad\forall\underline{q}_h\in\LaXgrad{h}.\label{eq:ic.constrained.2}
  \end{alignat}
\end{subequations}
If $(\ymA_0,\ymE_0)$ are smooth and satisfy \eqref{eq:ym.const.E}, then solutions to \eqref{eq:ic.constrained} are consistent discretisations of these continuous initial conditions.

Making $\theta=\frac12$ in \eqref{eq:ym.lm.scheme} yields a Crank--Nicolson scheme that is consistent of order 2 in time. However, that choice does not seem to obviously lead to a stability result as in Proposition \ref{prop:energy.conservation}. In fact, we have only been able to establish an energy estimate on the scheme when $\theta=1$ and the discrete constraint is exactly satisfied.

\begin{proposition}[Energy dissipation for the constrained scheme]\label{prop:energy.lm.ym}
If $\theta=1$ in \eqref{eq:ym.lm.scheme} (implicit Euler time-stepping) and the initial conditions $(\underline{\ymA}_h^0,\underline{\ymE}_h^0)$ are such that $\mathfrak C^0\equiv 0$, then we have the decay of energy in the sense that, for all $n$,
\begin{equation}\label{eq:lm.ym.energy}
\frac{1}{2}\norm[\CURL,\La,h]{\underline{\ymE}_h^{n+1}}^2+\frac{1}{2}\norm[\DIV,\La,h]{\underline{\ymB}_h^{n+1}}^2\leq\frac{1}{2}\norm[\CURL,\La,h]{\underline{\ymE}_h^n}^2+\frac{1}{2}\norm[\DIV,\La,h]{\underline{\ymB}_h^n}^2.
\end{equation}
\end{proposition}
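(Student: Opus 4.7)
The plan is to mimic the proof of Proposition \ref{prop:energy.conservation}, testing \eqref{eq:ym.lm.scheme.2} against $\uvec{v}_h=\underline{\ymE}_h^{n+\theta}=\underline{\ymE}_h^{n+1}$ (using $\theta=1$), and showing that the two extra Lagrange multiplier contributions on the left-hand side disappear thanks to the hypothesis $\mathfrak C^0\equiv 0$.

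\textbf{Step 1: identify the Lagrange terms with $\mathfrak C^{n+1}(\underline{\lambda}_h^{n+1})$.} With the above choice of test function, the pair of Lagrange terms reads
\[
(\LauGh\underline{\lambda}_h^{n+1},\underline{\ymE}_h^{n+1})_{\CURL,\La,h}
+\int_U\langle[\LaPcurlh\underline{\ymA}_h^{n+1},\LaPgradh\underline{\lambda}_h^{n+1}],\LaPcurlh\underline{\ymE}_h^{n+1}\rangle.
\]
By symmetry of the inner products $(\cdot,\cdot)_{\CURL,\La,h}$ and $\int_U\langle\cdot,\cdot\rangle$, this is exactly $\mathfrak C^{n+1}(\underline{\lambda}_h^{n+1})$ as defined in \eqref{discrete.const}. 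The assumption $\mathfrak C^0\equiv 0$ gives $\mathfrak C^0(\underline{\lambda}_h^{n+1})=0$, and Proposition \ref{prop:constraint.preservation} applied with the fixed discrete field $\underline{q}_h=\underline{\lambda}_h^{n+1}$ propagates this to $\mathfrak C^{n+1}(\underline{\lambda}_h^{n+1})=0$. The Lagrange contributions therefore vanish.

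\textbf{Step 2: reduce to the unconstrained energy computation.} With the Lagrange terms gone, the resulting identity after multiplying by $\deltat^{n+\frac12}$ is structurally identical to the one analysed in Proposition \ref{prop:energy.conservation}. On the left, $(\underline{\ymE}_h^{n+1}-\underline{\ymE}_h^n,\underline{\ymE}_h^{n+1})_{\CURL,\La,h}$ expands via the polarisation identity \eqref{eq:energy.norm.calc} (with $\theta=1$) into
\[
\tfrac12\norm[\CURL,\La,h]{\underline{\ymE}_h^{n+1}}^2-\tfrac12\norm[\CURL,\La,h]{\underline{\ymE}_h^n}^2+\tfrac12\norm[\CURL,\La,h]{\underline{\ymE}_h^{n+1}-\underline{\ymE}_h^n}^2.
\]
On the right, substituting $\deltat^{n+\frac12}\underline{\ymE}_h^{n+1}=\underline{\ymA}_h^n-\underline{\ymA}_h^{n+1}$ from \eqref{eq:ym.lm.scheme.1} and exploiting the bilinearity and symmetry of $\ebkttr{\cdot}{\cdot}$ yields, exactly as in the unconstrained case,
\[
(\underline{\ymB}_h^{n+1},\underline{\ymB}_h^n-\underline{\ymB}_h^{n+1})_{\DIV,\La,h}
=\tfrac12\norm[\DIV,\La,h]{\underline{\ymB}_h^n}^2-\tfrac12\norm[\DIV,\La,h]{\underline{\ymB}_h^{n+1}}^2-\tfrac12\norm[\DIV,\La,h]{\underline{\ymB}_h^{n+1}-\underline{\ymB}_h^n}^2.
\]
Equating and rearranging produces \eqref{eq:lm.ym.energy}, with the two squared differences giving the strict dissipation (as expected for implicit Euler).

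\textbf{Main obstacle.} The only non-routine point is Step 1: recognising the combination of the two Lagrange contributions as $\mathfrak C^{n+1}(\underline{\lambda}_h^{n+1})$ and invoking constraint propagation with a time-dependent test. The reason the argument works is precisely that Proposition \ref{prop:constraint.preservation} establishes $n$-independence of $\mathfrak C^n(\underline{q}_h)$ for every \emph{fixed} $\underline{q}_h$, which, combined with $\mathfrak C^0\equiv 0$ on the whole space $\LaXgrad{h}$, lets us evaluate at $\underline{q}_h=\underline{\lambda}_h^{n+1}$. This is also exactly why the two hypotheses of the proposition ($\theta=1$ so that the test function coincides with $\underline{\ymE}_h^{n+1}$, and $\mathfrak C^0\equiv 0$) are both essential; for $\theta\neq 1$ the Lagrange terms would involve $\underline{\ymE}_h^{n+\theta}$ rather than $\underline{\ymE}_h^{n+1}$ or $\underline{\ymE}_h^{n}$ and would not assemble into a single $\mathfrak C$.
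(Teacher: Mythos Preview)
Your proof is correct and follows essentially the same route as the paper: substitute $\theta=1$ and $\uvec{v}_h=\underline{\ymE}_h^{n+1}$ in \eqref{eq:ym.lm.scheme.2}, recognise the Lagrange contributions as $\mathfrak C^{n+1}(\underline{\lambda}_h^{n+1})$, kill them via Proposition~\ref{prop:constraint.preservation} together with $\mathfrak C^0\equiv 0$, and then conclude by the computation of Proposition~\ref{prop:energy.conservation}. Your write-up is in fact somewhat more detailed than the paper's, in particular your explicit remark that $\mathfrak C^0\equiv 0$ on all of $\LaXgrad{h}$ is what allows the use of the time-dependent test $\underline{q}_h=\underline{\lambda}_h^{n+1}$.
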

\begin{proof}
Substitute $\theta=1$ and $\uvec{v}_h=\underline{\ymE}_h^{n+1}$ into equation \eqref{eq:ym.lm.scheme.2}.
The Lagrange multiplier term appearing then in \eqref{eq:ym.lm.scheme} is $\mathfrak C^{n+1}(\underline{\lambda}^{n+1}_h)$, which vanishes by Proposition \ref{prop:constraint.preservation} and the choice of initial conditions that ensure $\mathfrak C^0(\underline{\lambda}^{n+1}_h)=0$. The equation \eqref{eq:ym.lm.scheme.2} with $\uvec{v}_h=\underline{\ymE}_h^{n+1}$ then reduces to \eqref{eq:unconst.2}, and the proof can be concluded as in Proposition \ref{prop:energy.conservation} with $\theta=1$.
\end{proof}

%% Tests
\section{Numerical tests}\label{sec:tests}

We present some numerical results of the constrained scheme \eqref{eq:ym.lm.scheme} using an implicit Euler time stepping. The DDR and LADDR complexes and the scheme were implemented using the C++ \texttt{HArDCore3D} library (see \url{https://github.com/jdroniou/HArDCore}), relying on the \texttt{Eigen3} library (see \url{http://eigen.tuxfamily.org}) for linear algebra operations. The linear systems appearing in the Newton algorithm (see below) were solved using either \texttt{Intel MKL PARADISO} (see \url{https://software.intel.com/en-us/mkl}) or \texttt{LeastSquaresConjugateGradient}, the embedded Eigen least square solver. We also used the \texttt{Spectra} library (see \url{https://spectralib.org/}) for finding the eigenvalues of the system matrices.

We consider the time domain $[0,1]$, and the space domain $U=(0,1)^3$, that we discretise using three families of meshes: Voronoi polytopal meshes, tetrahedral meshes, and cubic meshes (see Figure \ref{fig:meshes}). All the tests are run with natural boundary conditions as in \eqref{eq:bcs} -- non-homogeneous ones when we test the convergence rate to a known exact solution (Section \ref{sec:tests.convergence}), homogeneous ones when we test the constraint preservation (Section \ref{sec:tests.constraint.preservation}). We choose a uniform time step $\deltat$ such that, for each mesh of size $h$, $[0,1]$ is subdivided into $\max\{10,\left \lceil{5/h}\right \rceil\}$ time steps. The Lie algebra is taken to be $\La=\mathfrak{su}(2)$, represented as the real span of the basis $e_I=-\frac{i}{2}\sigma_I$ for $I\in\{1,2,3\}$ over $\Real$, where $\sigma_I$ are the Pauli matrices
\begin{equation*}
  \sigma_1=
  \begin{pmatrix}
    0&1\\
    1&0
  \end{pmatrix},\,
  \sigma_2=
  \begin{pmatrix}
    0&-i\\
    i&0
  \end{pmatrix},\,
  \sigma_3=
  \begin{pmatrix}
    1&0\\
    0&-1
  \end{pmatrix}.
\end{equation*}

\begin{figure}\centering
  \begin{minipage}{0.275\textwidth}
    \includegraphics[width=0.90\textwidth]{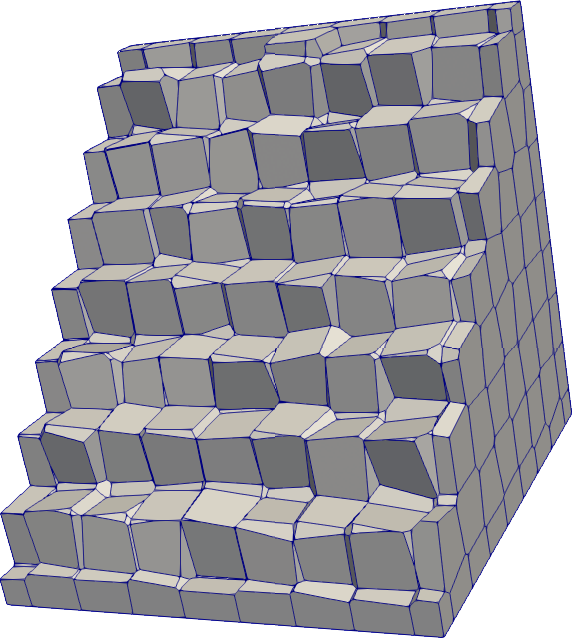}
    \subcaption{``Voro-small-0'' mesh}
  \end{minipage}
  \hspace{0.25cm}
  \begin{minipage}{0.275\textwidth}
    \includegraphics[width=0.90\textwidth]{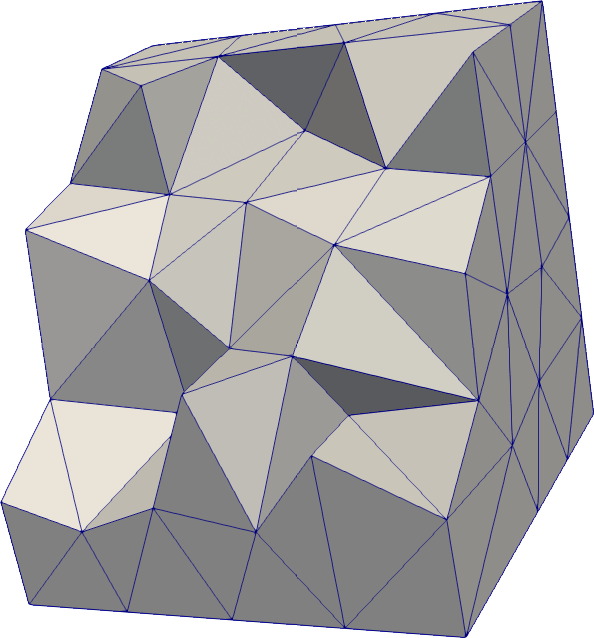}
    \subcaption{``Tetgen-Cube-0'' mesh}
  \end{minipage}
  \hspace{0.25cm}
  \begin{minipage}{0.275\textwidth}
    \includegraphics[width=0.90\textwidth]{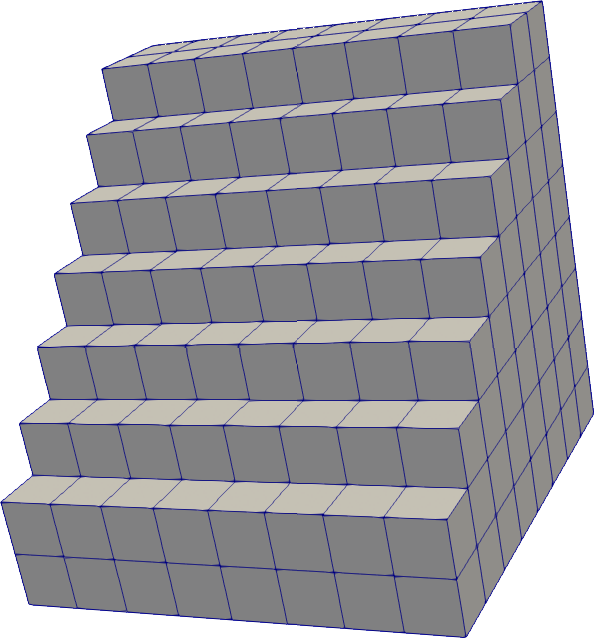}
    \subcaption{``Cubic-Cells'' mesh}
  \end{minipage}
  \caption{Meshes used in the numerical tests.}
  \label{fig:meshes}
\end{figure}

At each time, the non-linear system \eqref{eq:ym.lm.scheme} is solved using Newton iterations: writing this system as $F(\bvec{Z})=\bvec{b}$ (with $\bvec{Z}$ the vector of discrete potential and electric field, and $\bvec{b}$ encoding the unknowns at the previous time step as well as the forcing terms when relevant), we compute a sequence of solutions $(\bvec{Z}_n)_n$ such that $DF_{|\bvec{Z}_n}(\bvec{Z}_{n+1}-\bvec{Z}_n)=-F(\bvec{Z}_n)+\bvec{b}$, and we use the stopping criterion
\begin{equation*}
  \frac{\norm[l^2]{F(\bvec{Z}_{n+1})-\bvec{b}}}{\norm[l^2]{\bvec{b}}}\leq\epsilon
\end{equation*}
for $\epsilon=10^{-6}$. We noticed in our tests that, if $\deltat$ is too large compared to $h$, the Newton iterations may not converge. The time step we indicate above was found, by trial and error, as a good compromise to ensure convergence of the non-linear iterations without increasing too much the number of time steps and the resulting computational cost (note that we also take $\delta t=\mathcal O(h)$ to ensure that the truncation errors in space and time decay at the same rate). A deeper analysis of these questions, including the option of using relaxation in Newton, is the purpose of a future work; we note however that the limitation on the time step is much less severe than for, e.g., schemes for Navier--Stokes equations, and that the Newton algorithm converges much faster for the Yang--Mills equations (probably due to the nature of the non-linearity in this model, which does not include any derivatives).

We raised in Remark \ref{rem:lagrange} the question of the uniqueness of the Lagrange multiplier $\lambda$ in the constrained scheme. We numerically checked the invertibility of the matrices $M=DF(\bvec{Z})$ involved in the Newton iterations, by computing the minimum eigenvalue of $M^tM$.
In many cases, this eigenvalue was close to machine precision, indicating that the linear system is probably singular, and there are infinitely many solutions. This singularity can be problematic for certain solvers (in particular, Pardiso is not ensured to solve a singular system); in practice, given the energy estimate \eqref{prop:energy.lm.ym} on the potential and electric field, we believe that the singularity occurs mostly on the components related to $\lambda$, indicating that this Lagrange multiplier may not be unique (imposing a zero integral on this Lagrange multiplier led to the invertibility of the augmented linear system, but this constraint cannot be imposed in the scheme \eqref{eq:ym.lm.scheme} without losing the preservation of constraint -- this can be theoretically verified, and we also checked it numerically). Exploring variants of the scheme or the non-linear iterations which ensure the well-posedness of each linear system is a matter that remains to be explored (and does not seem to have been considered in the previous work using the similar constrained equation \cite{Christiansen.Winther:06}). For all the results presented here, we found that Pardiso (for Voronoi meshes) or the Eigen least-square solver (for tetrahedral and cubic meshes) managed to find an acceptable solution (residuals of the linear systems below $10^{-12}$ for most cases, with the worst one at $10^{-9}$ for the fourth mesh in the Voronoi family).

We use for the rest of this section, the gauge potential $\ymA(t)$: 
\begin{equation}\label{eq:gauge.A}
\begin{aligned}
  \ymA(t)={}&
  \begin{bmatrix}
    -0.5\cos(t)\sin(\pi x)\cos(\pi y)\cos(\pi z)\\ 
    \cos(t)\cos(\pi x)\sin(\pi y)\cos(\pi z)\\ 
    -0.5\cos(t)\cos(\pi x)\cos(\pi y)\sin(\pi z)
  \end{bmatrix}
  \otimes e_1 +
  \begin{bmatrix}
    -0.5\sin(t)\sin(\pi x)\cos(\pi y)\cos(\pi z)\\ 
    \sin(t)\cos(\pi x)\sin(\pi y)\cos(\pi z)\\ 
    -0.5\sin(t)\cos(\pi x)\cos(\pi y)\sin(\pi z)
  \end{bmatrix}
  \otimes e_2 \\
  &+
  \begin{bmatrix}
    -0.5\sin(t)\sin^2(\pi y)\\ 
    \cos(t)\cos^2(\pi z)\\ 
    -0.5\sin(t)\cos^2(\pi x)
  \end{bmatrix}\otimes e_3,
\end{aligned}
\end{equation}
from which all remaining functions and (non-homogeneous) boundary conditions can be derived.

\begin{table}\centering
\begin{tabular}{c|ccccc||ccccc|}
 & \multicolumn{5}{c||}{Voronoi mesh} & \multicolumn{5}{c|}{Tetrahedral mesh}\\
 & 1 & 2 & 3 & 4 & 5 & 1 & 2 & 3 & 4 & 5 \\
 \hline
$h$ & 0.83 & 0.45 & 0.31 & 0.22 & 0.18 & 0.56 & 0.50 & 0.39 & 0.31 & 0.26 \\
$\deltat$ & 0.1 & 0.083 & 0.059 & 0.043 & 0.034 & 0.1 & 0.091 & 0.077 & 0.063 & 0.05 \\
$N_{\rm avg}\;(\epsilon=10^{-6})$ & 2 & 2 & 2 & 2.6 & 1.4 & 2 & 2 & 2 & 1.9 & 1.6 \\
$N_{\rm tot}\;(\epsilon=10^{-6})$ & 20 & 24 & 34 & 60 & 42 & 20 & 22 & 26 & 31 & 32 \\
$N_{\rm avg}\;(\epsilon=10^{-10})$ & 2.3 & 2.3 & 2.1 & 3.3 & 2 & 2 & 2 & 2 & 2 & 2 \\
$N_{\rm tot}\;(\epsilon=10^{-10})$ & 23 & 27 & 36 & 76 & 59 & 20 & 22 & 26 & 32 & 40 \\ 
\end{tabular}
\caption{Numerical parameters and Newton iterations (averaged over the time steps $N_{\rm avg}$, and total $N_{\rm tot}$ over all time steps) the for the tests in Section \ref{sec:tests.convergence}, for values $\epsilon=10^{-6},10^{-10}$ of the stopping criterion.}
\label{tab:num.para.test}
\end{table}

\subsection{Convergence tests}\label{sec:tests.convergence}

To test the convergence, we modify the scheme \eqref{eq:ym.lm.scheme} by adding the forcing term $(\LaIcurl{h}\bvec{F}(t), \uvec{v})_{\CURL,\La,h}$ to \eqref{eq:ym.lm.scheme.2} where $\bvec{F} = \partial_t\ymE-\CURL\ymB-\ebkt{\ymA}{\ymB}$, and $\frac1\deltat(\LaIcurl{h}\ymE(t+\deltat)-\LaIcurl{h}\ymE(t), \uvec{v})_{\CURL,\La,h}$ to \eqref{eq:ym.lm.scheme.3} to ensure we solve for the fields derived from \eqref{eq:gauge.A}. The initial conditions are the interpolates computed via \eqref{eq:ic.interpolated}, and we compare our solution at $t=1$ with the interpolates $\LaIcurl{h}\ymA(1)$, $\LaIcurl{h}\ymE(1)$ in the $\LaXcurl{h}$ norm. The relative errors for the three families of meshes are shown in Figure \ref{fig:errors} where, from our choice of $\deltat$, we see that the scheme converges at the expected rate of 1 (actually, slightly above 1 for the error in $\ymE$ on tetrahedral meshes, but probably due to the fact that the asymptotic regime has not been reached in these tests).

%%% Some errors %%%
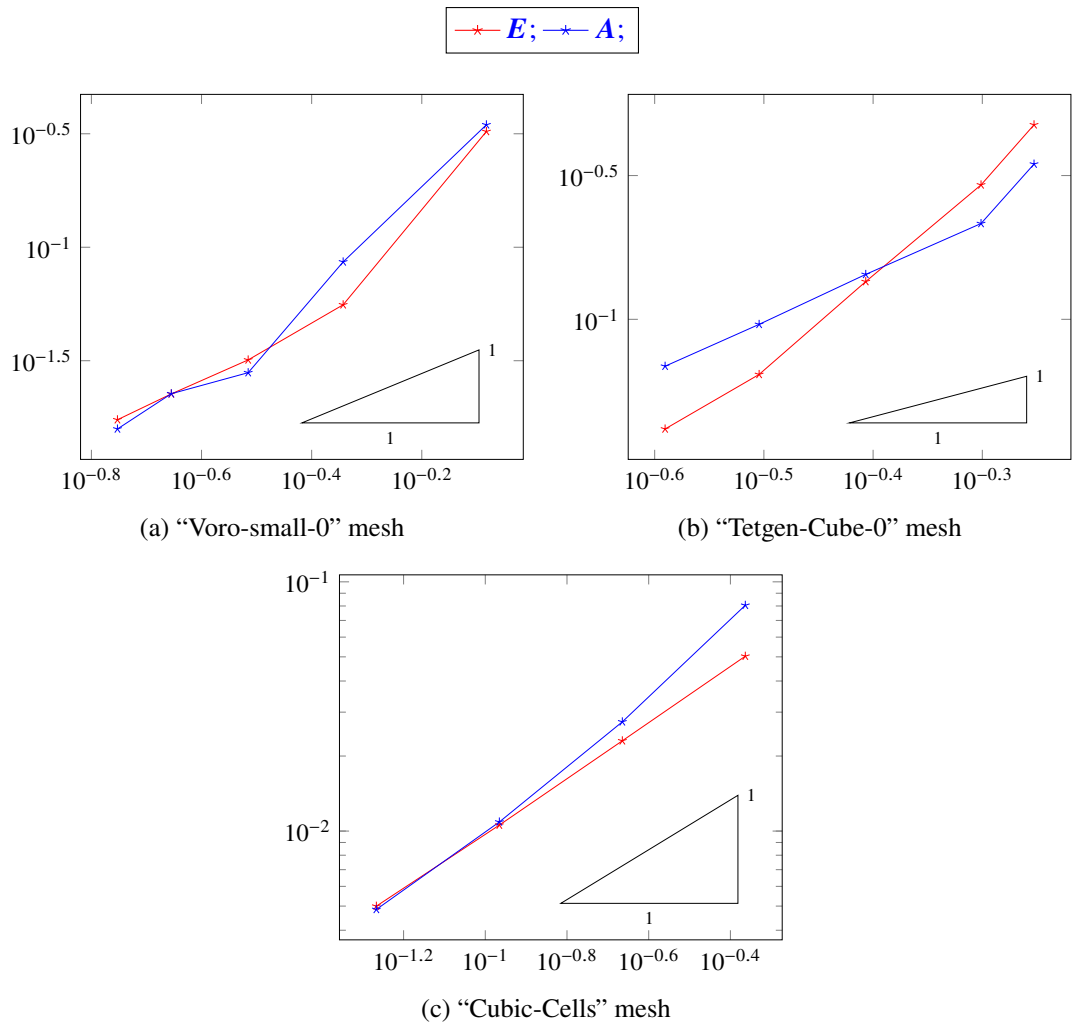
\begin{figure}\centering
  \ref{elec.pot.fields}
  \vspace{0.50cm}\\
  \begin{minipage}{0.45\textwidth}
    \begin{tikzpicture}[scale=0.85]
      \begin{loglogaxis} [legend columns=3, legend to name=elec.pot.fields]  
        \logLogSlopeTriangle{0.90}{0.4}{0.1}{1}{black};
        \addplot [mark=star, red] table[x=MeshSize,y=E_L2Elec] {outputs/errors/Voro-small-0_k0/data_rates.dat};
        \addlegendentry{$\ymE$;}
        \addplot [mark=star, blue] table[x=MeshSize,y=E_L2Pot] {outputs/errors/Voro-small-0_k0/data_rates.dat};
        \addlegendentry{$\ymA$;}
      \end{loglogaxis}            
    \end{tikzpicture}
    \subcaption{``Voro-small-0'' mesh}
  \end{minipage}
  \begin{minipage}{0.45\textwidth}
    \begin{tikzpicture}[scale=0.85] 
      \begin{loglogaxis}
        \logLogSlopeTriangle{0.90}{0.4}{0.1}{1}{black};
        \addplot [mark=star, red] table[x=MeshSize,y=E_L2Elec] {outputs/errors/Tetgen-Cube-0_k0/data_rates.dat};
        \addplot [mark=star, blue] table[x=MeshSize,y=E_L2Pot] {outputs/errors/Tetgen-Cube-0_k0/data_rates.dat};           
        \end{loglogaxis} 
      \end{tikzpicture}
    \subcaption{``Tetgen-Cube-0'' mesh}
  \end{minipage}\\[0.5em]
  \begin{minipage}{0.45\textwidth}
    \begin{tikzpicture}[scale=0.85] 
      \begin{loglogaxis}
        \logLogSlopeTriangle{0.90}{0.4}{0.1}{1}{black};
        \addplot [mark=star, red] table[x=MeshSize,y=E_L2Elec] {outputs/errors/Cubic-Cells_k0/data_rates.dat};
        \addplot [mark=star, blue] table[x=MeshSize,y=E_L2Pot] {outputs/errors/Cubic-Cells_k0/data_rates.dat};           
        \end{loglogaxis} 
      \end{tikzpicture}
    \subcaption{``Cubic-Cells'' mesh}
  \end{minipage}\\[0.5em]
  \caption{Tests of Section \ref{sec:tests.convergence}: relative errors on Voronoi, tetrahedral and cubic meshes}
  \label{fig:errors}
\end{figure}

We record also the effect of mesh size and a smaller stopping value ($\epsilon=10^{-10}$) on the Newton iterations in these tests. The results in Table \ref{tab:num.para.test} show that a rather small number of non-linear iterations is actually required; interestingly, this number does not increase as the mesh size decreases, and does not change much either when $\epsilon$ is reduced. This demonstrates that, on this problem, the straight Newton solver (without the need for relaxation or other adjustments) is a very efficient one -- provided as explained above that the time step is not too large.

\subsection{Constraint preservation}\label{sec:tests.constraint.preservation}

We now present tests to assess the preservation of constraint property of the scheme \eqref{eq:ym.lm.scheme}. Our initial conditions are built on the gauge \eqref{eq:gauge.A}, either by simple interpolation as in \eqref{eq:ic.interpolated} or by projecting them through \eqref{eq:ic.constrained} to ensure the zero initial constraint $\mathfrak C^0\equiv 0$. We present the results in Figure \ref{fig:constraint}, in which we plot against the mesh size the maximum over $n$ of the norm (in the dual space of $\LaXgrad{h}$) of $\mathfrak C^n-\mathfrak C^0$. As expected in both cases, the norm of this difference stays close to machine precision, accounting for some accumulation of the rounding errors as $h$ decreases.

%%% Constraint preservation %%%
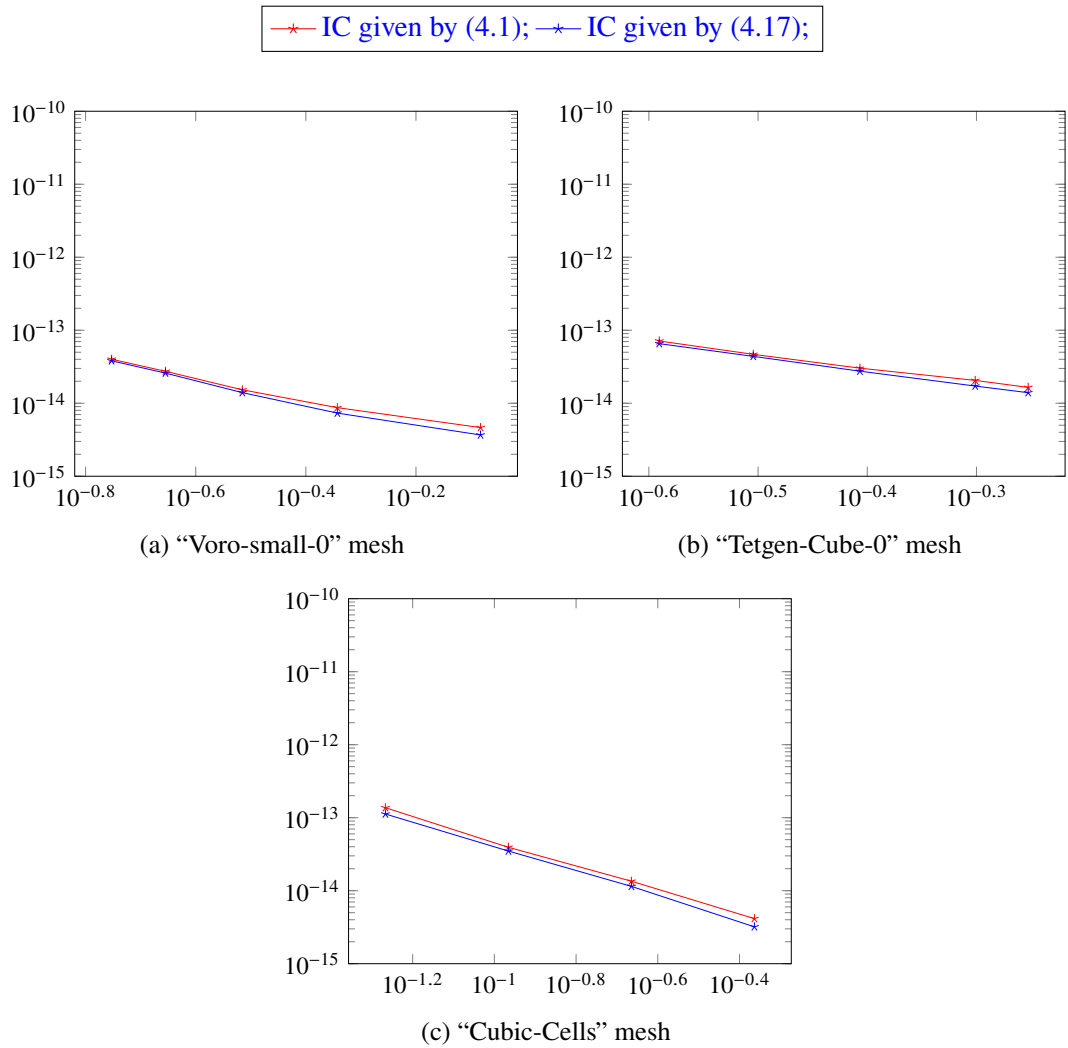
\begin{figure}\centering
  \ref{constraint.zero}
  \vspace{0.50cm}\\
  \begin{minipage}{0.45\textwidth}
    \begin{tikzpicture}[scale=0.85]
      \begin{loglogaxis} [legend columns=3, legend to name=constraint.zero, ymin=1e-15, ymax=1e-10]
        \addplot [mark=star, red] table[x=MeshSize,y=MaxConstr] {outputs/constr-preserve/Voro-small-0_k0/data_rates.dat};
        \addlegendentry{IC given by \eqref{eq:ic.interpolated};}
        \addplot [mark=star, blue] table[x=MeshSize,y=MaxConstr] {outputs/IC-constr-preserve/Voro-small-0_k0/data_rates.dat};
        \addlegendentry{IC given by \eqref{eq:ic.constrained};}
      \end{loglogaxis}            
    \end{tikzpicture}
    \subcaption{``Voro-small-0'' mesh}
  \end{minipage}
  \begin{minipage}{0.45\textwidth}
    \begin{tikzpicture}[scale=0.85] 
      \begin{loglogaxis}[ymin=1e-15, ymax=1e-10]
        \addplot [mark=star, red] table[x=MeshSize,y=MaxConstr] {outputs/constr-preserve/Tetgen-Cube-0_k0/data_rates.dat};
        \addplot [mark=star, blue] table[x=MeshSize,y=MaxConstr] {outputs/IC-constr-preserve/Tetgen-Cube-0_k0/data_rates.dat};           
        \end{loglogaxis} 
      \end{tikzpicture}
    \subcaption{``Tetgen-Cube-0'' mesh}
  \end{minipage}\\[0.5em]
  \begin{minipage}{0.45\textwidth}
    \begin{tikzpicture}[scale=0.85] 
      \begin{loglogaxis}[ymin=1e-15, ymax=1e-10]
        \addplot [mark=star, red] table[x=MeshSize,y=MaxConstr] {outputs/constr-preserve/Cubic-Cells_k0/data_rates.dat};
        \addplot [mark=star, blue] table[x=MeshSize,y=MaxConstr] {outputs/IC-constr-preserve/Cubic-Cells_k0/data_rates.dat};           
        \end{loglogaxis} 
      \end{tikzpicture}
    \subcaption{``Cubic-Cells'' mesh}
  \end{minipage}
  \caption{Tests of Section \ref{sec:tests.constraint.preservation} (preservation of constraint): maximum over $n$ of the norms of $\mathfrak C^n-\mathfrak C^0$ against mesh sizes on Voronoi, tetrahedral and cubic meshes}
  \label{fig:constraint}
\end{figure}

%% High order
\section{Extension to higher order}\label{sec:DDR.high.order}

\subsection{Arbitrary order DDR and LADDR complexes}

The presentation of the DDR spaces for an arbitrary order of accuracy requires the introduction of the following polynomial sub-spaces, in which $\ell\ge 0$ is a natural number (and we recall that $\bvec{x}_\sfP$ is a point on $\sfP\in\{F,T\}$): for a face $F\in\Fh$,
\begin{alignat*}{2}
    \Goly{\ell}(F)&\coloneq\GRAD_F\Poly{\ell+1}(F),
    &\qquad
    \cGoly{\ell}(F)&\coloneq(\bvec{x}-\bvec{x}_F)^\perp\Poly{\ell-1}(F),
    \\ 
    \Roly{\ell}(F)&\coloneq\VROT_F\Poly{\ell+1}(F),
    &\qquad
    \cRoly{\ell}(F)&\coloneq(\bvec{x}-\bvec{x}_F)\Poly{\ell-1}(F),
\end{alignat*}
where $(\bvec{x}-\bvec{x}_F)^\perp$ denotes the vector $\bvec{x}-\bvec{x}_F$ rotated by $-\pi/2$ in the plane of $F$
oriented by $\normal_F$ and, for an element $T\in\Th$,
\begin{alignat*}{2}
    \Goly{\ell}(T)&\coloneq\GRAD\Poly{\ell+1}(T),
    &\qquad 
    \cGoly{\ell}(T)&\coloneq(\bvec{x}-\bvec{x}_T)\times \vPoly{\ell-1}(T),
    \\ 
    \Roly{\ell}(T)&\coloneq\CURL\vPoly{\ell+1}(T),
    &\qquad
    \cRoly{\ell}(T)&\coloneq(\bvec{x}-\bvec{x}_T)\Poly{\ell-1}(T).
\end{alignat*}
If $\ell=0$ we set $\Poly{-1}(F)=\Poly{-1}(T)=\{0\}$. For $\sfP\in\{F,T\}$ and $\cvec{X}\in\{\cvec{R},\cvec{G}\}$, $\cvec{X}^{\compl,\ell}(\sfP)$ is the Koszul complement of $\cvec{X}^\ell(\sfP)$ in $\vPoly{\ell}(\sfP)\coloneq \Poly{\ell}(\sfP)^d$, where $d$ is the dimension of $\sfP$ (in the case $\sfP=F$, elements of $\vPoly{\ell}(F)$ are considered tangential to $F$). We denote by $\Xproj{\ell}{\sfP}$ and $\Xcproj{\ell}{\sfP}$ the orthogonal projectors, respectively, on $\cvec{X}^\ell(\sfP)$ and $\cvec{X}^{\compl,\ell}(\sfP)$.

For a given polynomial degree $k\ge 0$ (representing the degree of accuracy of the complex), the DDR spaces are
\begin{equation*}
  \Xgrad[k]{h}\coloneq\Big\{
  \begin{aligned}[t]
  \underline{q}_h&=\big((q_T)_{T\in\Th},(q_F)_{F\in\Fh}, q_{\Eh}\big)\st
    \text{$q_T\in \Poly{k-1}(T)$ for all $T\in\Th$,}\\
    &\text{$q_F\in\Poly{k-1}(F)$ for all $F\in\Fh$,}\\
    &\text{and $q_{\Eh}\in C^0(\Eh)$ is such that $(q_{\Eh})_{|E}\in\Poly{k+1}(E)\quad\forall E\in\Eh$}
    \Big\},
  \end{aligned}
\end{equation*}
\begin{equation*}
  \Xcurl[k]{h}\coloneq\Big\{
  \begin{aligned}[t]
    \uvec{v}_h
    &=\big(
    (\bvec{v}_{\cvec{R},T},\bvec{v}_{\cvec{R},T}^\compl)_{T\in\Th},(\bvec{v}_{\cvec{R},F},\bvec{v}_{\cvec{R},F}^\compl)_{F\in\Fh}, (v_E)_{E\in\Eh}
    \big)\st
    \\
    &\qquad\text{$\bvec{v}_{\cvec{R},T}\in\Roly{k-1}(T)$ and $\bvec{v}_{\cvec{R},T}^\compl\in\cRoly{k}(T)$ for all $T\in\Th$,}
    \\
    &\qquad\text{$\bvec{v}_{\cvec{R},F}\in\Roly{k-1}(F)$ and $\bvec{v}_{\cvec{R},F}^\compl\in\cRoly{k}(F)$ for all $F\in\Fh$,}
    \\
    &\qquad\text{and $v_E\in\Poly{k}(E)$ for all $E\in\Eh$}\Big\},
  \end{aligned}
\end{equation*}
\begin{equation*}%\label{eq:Xdiv.h}
  \Xdiv[k]{h}\coloneq\Big\{
  \begin{aligned}[t]
    \uvec{w}_h
    &=\big((\bvec{w}_{\cvec{G},T},\bvec{w}_{\cvec{G},T}^\compl)_{T\in\Th}, (w_F)_{F\in\Fh}\big)\st
    \\
    &\qquad\text{$\bvec{w}_{\cvec{G},T}\in\Goly{k-1}(T)$ and $\bvec{w}_{\cvec{G},T}^\compl\in\cGoly{k}(T)$ for all $T\in\Th$,}
    \\
    &\qquad\text{and $w_F\in\Poly{k}(F)$ for all $F\in\Fh$}
    \Big\},
  \end{aligned}
\end{equation*}
and
\[
\Poly{k}(\Th)\coloneq\left\{
q_h\in L^2(U)\st\text{$(q_h)_{|T}\in\Poly{k}(T)$ for all $T\in\Th$}
\right\}.
\]
The interpolators on $\Xcurl[k]{h}$, $\Xdiv[k]{h}$ and $\Poly{k}(\Th)$ are defined as the $L^2$-projections on the corresponding polynomial subspaces; the face and element components of the interpolator on $\Xgrad[k]{h}$ are also based on $L^2$-projections, while the skeletal (edge) component is built from pointwise values at the vertices and projections on $\Poly{k-1}(E)$ for each $E\in\Eh$.
Discrete gradient, curl and divergence operators can then be designed between these spaces such that the following sequence forms a complex with the same cohomology as the continuous de Rham complex (see \cite{Di-Pietro.Droniou:21*1,Di-Pietro.Droniou.ea:22} for details of the construction and of the cohomology analysis):
\begin{equation*}
  \begin{tikzcd}
    \Real\arrow{r}{\Igrad[k]{h}} & \Xgrad[k]{h}\arrow{r}{\uGh[k]} & \Xcurl[k]{h}\arrow{r}{\uCh[k]} & \Xdiv[k]{h}\arrow{r}{\Dh[k]} & \Poly{k}(\Th)\arrow{r}{0} & \{0\}.
  \end{tikzcd}
\end{equation*}
As for the low-order DDR sequence, potential reconstructions (of degree $k+1$ on $\Xgrad[k]{h}$, and degree $k$ on $\Xcurl[k]{h}$ and $\Xdiv[k]{h}$) and inner products can also be designed on the spaces of this complex.

\medskip

The construction of an LADDR complex from this $k$-th order DDR complex is done by tensorisation of the components and operators, exactly as for the lowest-order LADDR complex of Section \ref{sec:LADDR}. So, for example,
\begin{equation*}
  \LaXcurl[k]{h}\coloneq\Big\{
  \begin{aligned}[t]
    \uvec{v}_h
    &=\big(
    (\bvec{v}_{\cvec{R},T},\bvec{v}_{\cvec{R},T}^\compl)_{T\in\Th},(\bvec{v}_{\cvec{R},F},\bvec{v}_{\cvec{R},F}^\compl)_{F\in\Fh}, (v_E)_{E\in\Eh}
    \big)\st
    \\
    &\qquad\text{$\bvec{v}_{\cvec{R},T}\in\Roly{k-1}(T)\otimes\La$ and $\bvec{v}_{\cvec{R},T}^\compl\in\cRoly{k}(T)\otimes\La$ for all $T\in\Th$,}
    \\
    &\qquad\text{$\bvec{v}_{\cvec{R},F}\in\Roly{k-1}(F)\otimes\La$ and $\bvec{v}_{\cvec{R},F}^\compl\in\cRoly{k}(F)\otimes\La$ for all $F\in\Fh$,}
    \\
    &\qquad\text{and $v_E\in\Poly{k}(E)\otimes\La$ for all $E\in\Eh$}\Big\}
  \end{aligned}
\end{equation*}
and the LADDR complex reads
\begin{equation*}
  \begin{tikzcd}
    \Real\otimes\La\arrow{r}{\LaIgrad[\La,k]{h}} & \LaXgrad[k]{h}\arrow{r}{\LauGh[\La,k]} & \LaXcurl[k]{h}\arrow{r}{\LauCh[\La,k]} & \LaXdiv[k]{h}\arrow{r}{\LaDh[\La,k]} & \Poly{k}(\Th)\otimes\La\arrow{r}{0} & \{0\},
  \end{tikzcd}
\end{equation*}
where the operators are defined as before by tensorisation of the scalar operators together with the identity on the Lie algebra component. Hence, for all $\uvec{v}_h=\uvec{v}_h^I\otimes e_I\in \LaXcurl[k]{h}$ with $\uvec{v}_h^I\in\Xcurl[k]{h}$, we set $\LauCh[\La,k]\uvec{v}_h= (\uCh[k]\uvec{v}_h^I)\otimes e_I$. Lie algebra-valued potential reconstructions are designed the same way, and we obtain inner products $(\cdot,\cdot)_{\bullet,\La,k,h}$ by applying the construction \eqref{eq:def.la.inner} using the inner product on the high-order scalar spaces $\Xbullet[k]{h}$.

\subsection{Scheme}

Schemes using the arbitrary-order LADDR complex defined above are obtained following the same principles as for the low order version: the continuous spaces, operators and $L^2$-inner products are replaced by the discrete (high-order) spaces, operators and inner products. For the specific case of the Yang--Mills equations, we also need to reconstruct a version of $\Hstar[\cdot,\cdot]$ in $\LaXdiv[k]{h}$. This can be done by defining $\ebkttrk{\cdot}{\cdot}:(\LaXcurl[k]{h})\times(\LaXcurl[k]{h})\to\LaXdiv[k]{h}$ such that, for all $\uvec{v}_h,\uvec{w}_h\in\LaXcurl[k]{h}$, all $F\in\Fh$ and $T\in\Th$, \begin{subequations}\label{eq:def.ebkttrk}
\begin{alignat}{2}
\left(\ebkttrk{\uvec{v}_h}{\uvec{w}_h}\right)_F={}&\lproj{k}{F}\left(\ebkt{\LatrFt[\La,k]\uvec{v}_F}{\LatrFt[\La,k]\uvec{w}_F}\cdot\normal_F\right)\in\Poly{k}(F)\otimes\La,
\label{eq:def.ebkttrk.defF}\\
\left(\ebkttrk{\uvec{v}_h}{\uvec{w}_h}\right)_{\cvec{G},T}={}&\Gproj{k-1}{T}\left(\ebkt{\LaPcurl[\La,k]\uvec{v}_T}{\LaPcurl[\La,k]\uvec{w}_T}\right)\in\Goly{k-1}(T)\otimes \La,
\label{eq:def.ebkttrk.def.GolyT}\\
\left(\ebkttrk{\uvec{v}_h}{\uvec{w}_h}\right)_{\cvec{G},T}^\compl={}&\Gcproj{k}{T}\left(\ebkt{\LaPcurl[\La,k]\uvec{v}_T}{\LaPcurl[\La,k]\uvec{w}_T}\right)\in\cGoly{k}(T)\otimes\La.
\label{eq:def.ebkttrk.def.cGolyT}
\end{alignat}
\end{subequations}
Here, the projectors $\lproj{k}{F}$, $\Gproj{k-1}{T}$ and $\Gcproj{k}{T}$ act, respectively, on the $\Poly{2k}(F)$ and $\vPoly{2k}(T)$ components of their arguments.

The constrained scheme, high-order equivalent of \eqref{eq:ym.lm.scheme}, then reads: Find families $(\underline{\ymA}_h^n)_n$, $(\underline{\ymE}_h^n)_n$, $(\underline{\lambda}_h^n)_n$ where $(\underline{\ymA}_h^n,\underline{\ymE}_h^n,\underline{\lambda}_h^n)\in(\LaXcurl[k]{h})\times(\LaXcurl[k]{h})\times(\LaXgrad[k]{h})$, such that, for all $n$, setting $\underline{\ymB}_h^{n+\theta}=\LauCh[\La,k]\underline{\ymA}_h^{n+\theta}+\frac12 \ebkttrk{\underline{\ymA}_h^{n+\theta}}{\underline{\ymA}_h^{n+\theta}}\in\LaXdiv[k]{h}$,
\begin{subequations}\label{eq:ym.lm.scheme.k}
\begin{alignat}{4}
\delta_t^{n+1}\underline{\ymA}_h&=-\underline{\ymE}_h^{n+\theta},\label{eq:ym.lm.scheme.k.1}\\ 
(\delta_t^{n+1}\underline{\ymE}_h,\uvec{v}_h)_{\CURL,\La,k,h}+{}&(\LauGh[\La,k]{\underline{\lambda}_h^{n+1}},\uvec{v}_h)_{\CURL,\La,k,h}+\int_U\langle[\LaPcurlh[\La,k]\underline{\ymA}_h^{n+\theta},\LaPgradh[\La,k+1]\underline{\lambda}_h^{n+1}],\LaPcurlh[\La,k]\uvec{v}_h\rangle\nonumber\\
=(\underline{\ymB}_h^{n+\theta},{}&\LauCh[\La,k]\uvec{v}_h+\ebkttrk{\underline{\ymA}_h^{n+\frac{1}{2}}}{\uvec{v}_h})_{\DIV,\La,k,h},\qquad\forall\uvec{v}_h\in\LaXcurl[k]{h},\label{eq:ym.lm.scheme.k.2}\\
(\delta_t^{n+1}\underline{\ymE}_h,\LauGh[\La,k]\underline{q}_h)_{\CURL,\La,k,h}{}&+\int_U\langle\LaPcurlh[\La,k](\delta_t^{n+1}\underline{\ymE}_h),[\LaPcurlh[\La,k]\underline{\ymA}_h^{n+1-\theta},\LaPgradh[\La,k+1]\underline{q}_h]\rangle\nonumber\\
&=0,\qquad\forall\underline{q}_h\in\LaXgrad[k]{h}.\label{eq:ym.lm.scheme.k.3}
\end{alignat}
\end{subequations}

The proofs of constraint preservation (Proposition \ref{prop:constraint.preservation}) and energy estimates (Proposition \ref{prop:energy.lm.ym}) do not rely on any particularity of the lowest-order LADDR complex, and these results are therefore still valid for \eqref{eq:ym.lm.scheme.k}, with obvious substitutions: the norms in \eqref{eq:lm.ym.energy} should be those of $\LaXcurl[k]{h}$ and $\LaXdiv[k]{h}$, and the discrete constraint \eqref{discrete.const} is defined using $\LauGh[\La,k]$, $\LaPcurlh[\La,k]$, $\LaPgrad[\La,k+1]$ and the inner product in $\LaXcurl[k]{h}$.
For the energy estimate, initial conditions $(\underline{\ymA}_h^0,\underline{\ymE}_h^0)\in(\LaXcurl[k]{h})\times(\LaXcurl[k]{h})$ with a vanishing discrete constraint can be obtained by solving the high-order equivalent of \eqref{eq:ic.constrained}.

%% Conclusion

\section{Conclusion}\label{sec:conclusion}

We designed a scheme for the Yang--Mills equations, in flat 3+1 formalism, based on the discrete de Rham complex. This discretisation can be applied on generic polyhedral meshes, which allows, for example, for seamless local mesh refinement; it also has an arbitrary-order variant. A novelty of the approach is the treatment at the discrete level of the various Lie brackets appearing in the weak formulation of the equations; in particular, we show how to obtain suitable discrete Lie brackets that enable a proper definition of the magnetic field, and proofs of the discrete preservation of constraint, as well as energy estimates for the implicit time stepping. The resulting system of equations is fully non-linear, due to the co-location in time of the potential and electric field, but can be efficiently solved using a straightforward Newton algorithm. We provided results of tests on a 3D domain for the lowest-order version of the scheme, showing the expected rate of convergence and preservation of the discrete constraint.

It should be noted that our approach, although based on the DDR as underlying (real-valued) polytopal method, can be easily adapted to other polytopal methods such as the Virtual Element Method.
Future work will tackle the question of convergence analysis of this scheme, and of numerically testing its higher-order version.
We also note that \cite{Christiansen:08-09} extends the approach of \cite{Christiansen.Winther:06} for conforming numerical approximations of more general wave equations than the Yang--Mills equations. We believe that, in a similar way, the DDR approach could be extended to such models.

%------------------------------------------------------------------------------%
% Bibliography
%------------------------------------------------------------------------------%

\printbibliography

\end{document}